\numberwithin{equation}{section}
\def\R{\mathbb{R}}
\def\C{\mathbb{C}}
\def\E{\mathbb{E}}
\def\Z{\mathbb{Z}}
\def\P{\mathbb{P}}
\def\Im{\operatorname{Im}}
\def\Re{\operatorname{Re}}
\newcommand{\zn}{Z_{\beta, N}}
\newcommand{\psin}{\Psi_{N}(\beta)}
\newcommand{\di}{\mathrm{d}}
\newcommand{\xib}{\xi(\beta)}
\newcommand{\hb}{\widetilde{h}_\beta}
\newcommand{\ub}{u_\beta}
\newtheorem{theorem}{Theorem}[section]
\newtheorem{proposition}[theorem]{Proposition}
\newtheorem{lemma}[theorem]{Lemma}
\newtheorem{claim}[theorem]{Claim}
\newtheorem{remark}{Remark}[section]
\newtheorem{corollary}[theorem]{Corollary}
\newtheorem{conj}[theorem]{Conjecture}
\begin{document}
\title{The Curie--Weiss model with complex temperature: phase transitions}
\author{
Mira Shamis\textsuperscript{1},  Ofer Zeitouni\textsuperscript{2}}
\footnotetext[1]{Department of Mathematics, 
Weizmann Institute of Science, Rehovot 7610001, Israel and School of Mathematical Sciences,
Queen Mary University of London, 
Mile End Road, London E1 4NS, England. E-mail:
m.shamis@qmul.ac.uk. Supported in part by ISF grant 147/15.}  \footnotetext[2]{Department of Mathematics, 
Weizmann Institute of Science, Rehovot 7610001, Israel. E-mail:
ofer.zeitouni@weizmann.ac.il. Supported in part by ISF grant 147/15. This project has received funding from the European Research Council (ERC) under the European Union's Horizon 2020 research and innovation programme (grant agreement No. 692452).}
\maketitle

\begin{abstract}
We study the partition function of the Curie--Weiss model
with complex temperature, and partially describe its phase transitions. 
As a consequence, we obtain information on the locations of zeros of the
partition function (the Fisher zeros).
\end{abstract}

\section{Introduction} 
An important component of large deviations theory is Varadhan's lemma, 
which states that if a sequence of probability
measures $\mu_N$ satisfies 
the large deviations principle in a (Polish) space $\mathcal{X}$
with speed $N$ and rate function $I$,
then for any bounded continuous function $f:\mathcal{X}\to \R$,
\begin{equation}
  \label{eq-1}
  \lim_{N\to\infty} \frac1N \log \int e^{N f(x)}\mu_N(dx) =
\sup_{x\in \mathcal{X}} (f(x)-I(x))\,.
\end{equation}
See \cite{DeZe} for a precise statement, relaxed 
assumptions, and applications.

In many applications, considering real-valued $f$ is too restrictive, and one may be 
interested in relaxing it to allow for complex-valued $f$. Statistical
mechanics provides for a rich class of examples; we mention in particular
the Yang--Lee theory \cite{LY}, where the complex 
perturbation is in form of a magnetic 
field, or the quantum spin chain models \cite{Maillet+}, 
where quantitites of interest such as emptiness formation can be formulated 
as  exponential asymptotics of the type \eqref{eq-1} with complex integrand.
Note that in such examples, because $f$ is multiplied by $N$, relatively
small changes in phase may lead to sign changes  of the integrand in
\eqref{eq-1} and therefore to cancelations.

It seems maybe naive  at this point to hope for a general theory, which would
consist of an analogue of \eqref{eq-1}. 
Our goal
in this paper is more modest: we consider one simple example, the
Curie--Weiss model with complex temperature, and partially develop
the asymptotic theory concerning  its partition function. While we are not able
to give a complete description of the associated phase diagram, we will 
be able to show that the phase diagram is not trivial.
 As a consequence
of our analysis, we will also obtain information on the (complex) zeros of the
partition function, which are called the Fisher zeros; see \cite{Fisher} for a discussion of the relations between these zeros and various critical exponents.

The Curie--Weiss model at complex temperature 
was also recently considered by Krasnytska et al.\ \cite{KBHK}; the focus of their work is on the Fisher
zeros in the vicinity of the critical point. We further comment on their results at the end of this introduction.

\smallskip
We begin by introducing the
Curie--Weiss model that we will consider.
Let $\sigma = (\sigma_1, \dots, \sigma_N)\in \{ -1, +1\}^N$. Define
the \textit{Hamiltonian}
\begin{equation}\label{eq:cw}
H_N(\sigma) = -\frac{1}{2N}\sum_{i,j = 1}^N \sigma_i\sigma_j = 
-\frac{N}{2} (m_N(\sigma))^2 ,
\end{equation}
where the \textit{magnetization} is
$m_N(\sigma) = \frac{1}{N}\sum_{i = 1}^N \sigma_i$. For $\beta \in \C$,
let $\zn$ denote the \textit{partition function}, i.e.
\begin{equation}\label{eq:z_n}
\zn = \frac{1}{2^N}\sum_{\sigma\in \{ -1, +1\}^N} \exp(-\beta H_N(\sigma))=
\int\cdots\int \exp(-\beta H_N(\sigma))\prod_{i=1}^N \mu(d\sigma_i), 
\end{equation}
where $\mu(d\sigma)=\frac12 (\delta_1+\delta_{-1})$.

When $\beta$ is real, it is an easy exercise to apply Varadhan's
lemma \eqref{eq-1} and Cramer's theorem concerning the large deviations
of $m_N$ in order to conclude that 
\[
F_\beta = \lim_{N\to\infty}\frac{1}{N}\log |\zn| =
\begin{cases}
  0, & \beta \in (-\infty, 1]\\
  >0, &\beta \in (1,\infty),
\end{cases}
\]
where $-F_\beta/\beta $ is the \textit{free energy}.
More refined analysis (see e.g. \cite{EN})
yields that for $\beta\in \R\setminus\{1\}$,
\begin{equation}\label{eq:zn-real}
\zn = A_\beta e^{NF_\beta}(1 + o(1)),
\end{equation}
where $A_\beta > 0$ is some constant that depends only on $\beta$;
this  is due to 
the Gaussian nature of the fluctuations of $\sqrt{N} (m_N-m^*(\beta))$,
where $m^*(\beta)$ is the asymptotic magnetization, under the
measure $\exp(-\beta H_N(\sigma))\prod_{i=1}^N \mu(d\sigma_i)/\zn$.
Also, $m^*(\beta)=0$ for $\beta \leq 1$.
\begin{remark}
Here and throughout the paper, the notation $O(\cdot)$ and $o(\cdot)$ 
is used for asymptotics as $N\to\infty$, for fixed $\beta\in\C$. That is, 
$a_N=O(b_N)$ if $\limsup_{N\to\infty} |a_N|/|b_N|<\infty$ and
$a_N=o(b_N)$ if the last $\limsup$ equals $0$. When we want to emphasize
dependence on other parameters, we use the notation $o_{\epsilon,R}$, etc.
\end{remark}

When $\beta=(1+\epsilon+iR)\in \C$, 
one expects to similarly have a separation between
a region where $F_\beta=0$ and $F_\beta\neq 0$. In particular, 
one predicts the existence of a critical curve $\mathcal{C}$  in the
complex plane, passing through $1$, that divides the complex plane
into a region where $F_\beta=0$ and its complement where $F_\beta\neq 0$.

For symmetry reasons, it is enough to consider $R\geq 0$.
Our first result describes a region where $F_\beta$ vanishes.
\begin{theorem}\label{th:zero_part} 
  There exist constants $c,c',\epsilon_0>0$ so that, 
  with $\beta = 1 + \epsilon + iR$, 
  if either   
  $0< \epsilon\leq \epsilon_0$ and
  $c \sqrt{\epsilon}\leq R\leq \frac{c'}{\sqrt{\epsilon}}$ 
  or $\epsilon<0$, 
  then
\begin{equation*}
  \zn = \sqrt{\frac{\beta}{\beta - \beta^2}}(1+o_{\epsilon,R}(1))
  ,\ \quad \lim_{N\to\infty} \frac{1}{N}\log\left|\zn\right| = 0.
\end{equation*}
\end{theorem}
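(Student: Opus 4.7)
The plan is to use the Hubbard--Stratonovich (Gaussian decoupling) identity to rewrite the partition function as a one-dimensional integral, and then apply Laplace's method at the saddle $y=0$. For $\Re\beta>0$ the identity $e^{\beta N m^2/2}=\sqrt{\beta N/(2\pi)}\int e^{-\beta N y^2/2+\beta N ym}\,dy$ combined with the independence of the $\sigma_i$ under $\mu^{\otimes N}$ yields
\begin{equation*}
Z_{\beta,N}=\sqrt{\frac{\beta N}{2\pi}}\int_{-\infty}^{\infty} e^{N\phi_\beta(y)}\,dy,\qquad \phi_\beta(y):=-\frac{\beta y^2}{2}+\log\cosh(\beta y).
\end{equation*}
Direct computation gives $\phi_\beta(0)=0$, $\phi'_\beta(0)=0$, and $\phi''_\beta(0)=\beta(\beta-1)$, so a formal Gaussian integration at $y=0$ produces exactly $\sqrt{\beta/(\beta-\beta^2)}$; the real content of the theorem is therefore the justification of Laplace's method along the real axis.

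I would first verify that the real line is a valid steepest-descent contour through $0$. A short identity gives $|\cosh(\beta y)|^2=\cosh^2((1+\epsilon)y)-\sin^2(Ry)$, whence
\begin{equation*}
\Re\phi_\beta(y)=-\frac{(1+\epsilon)y^2}{2}+\frac{1}{2}\log\!\bigl(\cosh^2((1+\epsilon)y)-\sin^2(Ry)\bigr),\qquad \Re\phi''_\beta(0)=\epsilon(1+\epsilon)-R^2.
\end{equation*}
Under the hypotheses this second-derivative real part is strictly negative: for $\epsilon\in(-1,0)$ because $\epsilon(1+\epsilon)<0$, and for $0<\epsilon\le\epsilon_0$ because $R^2\ge c^2\epsilon$ with $c$ chosen large enough. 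This supplies the Gaussian second-order decay of $\Re\phi_\beta$ near $0$.

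Next I would prove the uniform bound $\Re\phi_\beta(y)\le -\gamma(\epsilon,R)<0$ on $|y|\ge\delta$, where $\delta=\delta(\epsilon,R)$ is small enough that the Taylor expansion $\phi_\beta(y)=\tfrac12\phi''_\beta(0)y^2+O(y^4)$ is valid on $|y|\le\delta$. For $\epsilon\le 0$ this is essentially immediate: the ``real-$\beta$'' function $\psi(y):=-(1+\epsilon)y^2/2+\log\cosh((1+\epsilon)y)$ is non-positive and only dominates $\Re\phi_\beta(y)$ from above, while the linear growth $\log\cosh u\sim|u|$ gives Gaussian decay for large $|y|$. For $0<\epsilon\le\epsilon_0$ the obstruction is that $\psi$ carries a positive bump of height $\psi(m^*(1+\epsilon))=O(\epsilon^2)$ at the magnetization $m^*(1+\epsilon)=O(\sqrt{\epsilon})$, and this bump must be killed by the $-\sin^2(Ry)$ correction. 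At $y\asymp\sqrt{\epsilon}$ one has $Ry\asymp R\sqrt{\epsilon}$, and the lower bound $R\ge c\sqrt{\epsilon}$ with $c$ large enough forces $\sin^2(Ry)$ to overpower the bump; the upper bound $R\le c'/\sqrt{\epsilon}$ is used to guarantee that $Ry$ does not sweep through a full period of $\sin^2$ before the $-(1+\epsilon)y^2/2$ term takes over, so that a uniform gap can be extracted. This is the step I expect to be the main obstacle, since it requires a quantitative matching of two competing terms across the whole range $\delta\le|y|\lesssim 1$, rather than a single Taylor expansion.

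The final step is routine Laplace: on $|y|\le\delta$ use the Gaussian upper bound from the previous step to localize to $|y|\lesssim 1/\sqrt{N|\phi''_\beta(0)|}$, control the quartic remainder there, and evaluate the resulting Gaussian integral as $\sqrt{2\pi/(-N\phi''_\beta(0))}\,(1+o_{\epsilon,R}(1))$, while the tails $|y|\ge\delta$ contribute $O(e^{-N\gamma})$. Multiplying by the prefactor $\sqrt{\beta N/(2\pi)}$ delivers $Z_{\beta,N}=\sqrt{\beta/(\beta-\beta^2)}\,(1+o_{\epsilon,R}(1))$, and taking $\tfrac1N\log|\cdot|$ yields the vanishing of the free energy.
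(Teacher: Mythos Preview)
Your proposal is correct and matches the paper's approach closely. The paper uses the same Hubbard--Stratonovich representation and saddle-point analysis at $y=0$; for $\epsilon>0$ the quantitative matching you flag as the main obstacle is carried out via two elementary claims---one on $|y|\le\sqrt{8\epsilon}$ using $\sin^2(Ry)\ge R^2y^2/2$ (this is where $Ry\le\pi/4$, hence the upper bound $R\le c'/\sqrt\epsilon$, enters), and one on $|y|\ge\sqrt{8\epsilon}$ via the inequality $\cosh t\le e^{(1-\epsilon)t^2/2}$ for $t\ge(1+\epsilon)\sqrt{8\epsilon}$---and the paper takes $\delta=N^{-2/5}$ rather than a fixed $\delta$ followed by a second localization, which is cosmetic. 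The only genuine difference is in the $\epsilon<0$ case: rather than redoing Laplace, the paper invokes the known Gaussian fluctuations of $\sqrt N\,m_N$ under the real Curie--Weiss Gibbs measure to reduce $Z_{\beta,N}$ to a one-dimensional Gaussian integral in one line; your direct route via the domination $\Re\phi_\beta\le\psi\le 0$ is equally valid and more self-contained.
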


\begin{remark}
  One can make the constants $c,c',\epsilon_0$ explicit. Our proof
  gives $\epsilon_0=1/9,c=\sqrt{20}, c'=\pi/\sqrt{32}$, but these are certainly
  not optimal constants.
\end{remark}
\begin{remark}
  It is possible to also treat the case of $\epsilon=0$, where
  one may observe a transition as function of $R$: for $R=0$, it is 
  standard, see \cite[Theorem 2]{ML}, that $\zn$ is asymptotic to a constant
  multiple of $N^{1/4}$, while a local analysis near 
  the saddle point $0$ reveals that if $R>0$ is small
  then $\zn$ is asymptotic
  to an ($R$-dependent) constant, see Theorem 
  \ref{th:pacman} below.
\end{remark}

Our next result shows that along a particular curve that is asymptotic 
to $1+i\infty$ and to $\infty +\pi i$, indeed $F_\beta>0$. 
\begin{theorem} \label{th:positive_part} For $\beta = 1 +\epsilon + iR$ on the curve
\begin{equation}\label{eq:curve}1 + \epsilon = \frac{R}{2\pi}\log\left(\frac{1 + \frac{\pi}{R}}{1 - \frac{\pi}{R}}\right),\, \quad \pi < R < \infty,
\end{equation}
we have, for some constant $\widetilde{A}_\beta$, that
\[
|\zn| = Z_{\Re \beta, N} \widetilde{A}_\beta (1 + o(1)),\, \quad \lim_{N\to\infty} \frac{1}{N}\log|\zn| > 0.
\]
\end{theorem}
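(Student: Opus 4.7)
I would use the Hubbard--Stratonovich representation
\[
\zn \;=\; \sqrt{\frac{N\beta}{2\pi}}\int_\R e^{-N\beta x^2/2}\cosh^N(\beta x)\,\di x \;=\; \sqrt{\frac{N\beta}{2\pi}}\int_\R e^{N\phi(x)}\,\di x,
\]
with $\phi(x) = -\beta x^2/2 + \log\cosh(\beta x)$ on a continuously tracked branch (the identity for $\Re\beta>0$ follows by a contour shift from the real Gaussian identity), and perform a saddle-point analysis. The critical equation $\phi'(x)=0$ reads $\tanh(\beta x)=x$, or $2\beta x = \log\tfrac{1+x}{1-x}+2\pi i k$ for $k\in\Z$. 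With $\beta = 1+\epsilon+iR$ and $x^\ast = \pi/R$, the imaginary part of this equation forces $k=1$ and the real part collapses to exactly the curve equation \eqref{eq:curve}. Thus along the curve, $\pm x^\ast = \pm\pi/R$ are (real) critical points of $\phi$ coming from the $k=\pm 1$ branches.

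Equation \eqref{eq:curve} is also the saddle equation of the real Curie--Weiss model at $\Re\beta = 1+\epsilon$ with magnetization $\pi/R$, so $m^\ast(\Re\beta)=\pi/R$ and $\Re\beta>1$. Using $\cosh((1+\epsilon)\pi/R + i\pi)=-\cosh((1+\epsilon)\pi/R)$ and tracking $\log\cosh(\beta x)$ continuously from $0$ along $\R$ (the integrand being non-vanishing since $|\cosh(\beta x)|^2 = \sinh^2((1+\epsilon)x)+\cos^2(Rx)>0$ for $x\ne 0$), one obtains
\[
\phi(\pm x^\ast) \;=\; -\frac{\beta\pi^2}{2R^2}+\log\cosh\!\Big(\tfrac{(1+\epsilon)\pi}{R}\Big)\pm i\pi,\qquad \Re\phi(\pm x^\ast) = F_{\Re\beta}>0,
\]
whereas the subsidiary saddle at $0$ has $\phi(0)=0$, and $\phi''(\pm x^\ast) = \beta(\beta(1-\pi^2/R^2)-1)\neq 0$. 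So $\pm x^\ast$ are non-degenerate and exponentially dominant. Deforming $\R$ to a steepest-descent contour $\Gamma$ through both $\pm x^\ast$ and applying the Laplace expansion (noting that $e^{\pm iN\pi}=(-1)^N$ makes the two saddle contributions add coherently in phase) yields
\[
\zn = C_\beta\, e^{N\phi(x^\ast)}(1+o(1)),\qquad |\zn| = |C_\beta|\, e^{NF_{\Re\beta}}(1+o(1)),
\]
for an explicit nonzero $C_\beta$. Combined with \eqref{eq:zn-real} at $\Re\beta$, namely $Z_{\Re\beta,N}=A_{1+\epsilon}e^{NF_{\Re\beta}}(1+o(1))$, this gives $|\zn|=\widetilde A_\beta\, Z_{\Re\beta,N}(1+o(1))$ with $\widetilde A_\beta = |C_\beta|/A_{1+\epsilon}$; and since $\Re\beta>1$, $\lim N^{-1}\log|\zn| = F_{\Re\beta}>0$.

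The main obstacle is the global descent analysis: one must construct a contour $\Gamma$ through $\pm x^\ast$ reachable from $\R$ by a controlled homotopy, and verify that the infinitely many other critical points of $\phi$ (the non-real solutions of $\tanh(\beta x)=x$, for all $k\in\Z$) all satisfy $\Re\phi < F_{\Re\beta}$, so that their contributions are exponentially subdominant. Once this topological picture is in place, the remaining Laplace-method estimates and uniform control of the Gaussian error along $\Gamma$ are routine.
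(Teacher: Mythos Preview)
Your identification of the saddles $\pm\pi/R$ and the computation $\Re\phi(\pm\pi/R)=F_{\Re\beta}$ are correct, and the overall strategy would work. But the paper's argument bypasses entirely what you call the ``main obstacle''. The key observation is that for real $u$,
\[
\Re f_\beta(u)=(1+\epsilon)\frac{u^2}{2}-\tfrac12\log\bigl(\cosh^2((1+\epsilon)u)-\sin^2(Ru)\bigr)\;\ge\;(1+\epsilon)\frac{u^2}{2}-\log\cosh((1+\epsilon)u)=f_{1+\epsilon}(u),
\]
with equality exactly at $u=\pm\pi/R$ (where $\sin(Ru)=0$). Since $f_{1+\epsilon}$ attains its minimum on $\R$ at $\pm\pi/R$, so does $\Re f_\beta$. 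Hence the real axis is \emph{already} an adequate contour: no deformation is needed, no complex saddles have to be located or compared, and the tail bounds $|I_j|\le\widehat I_j$ are inherited directly from the real Curie--Weiss integral $\int e^{-Nf_{1+\epsilon}}$. The local Laplace expansion near $\pm\pi/R$ then gives the constant $\widetilde A_\beta$.

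So your route is sound but buys its conclusion at the price of a global steepest-descent analysis that you yourself flag as unresolved; the paper's inequality $\Re f_\beta\ge f_{1+\epsilon}$ on $\R$ makes that analysis unnecessary and reduces everything to the real case.
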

\begin{remark} The curve in Theorem \ref{th:positive_part} is asymptotic
  to $c''/\sqrt{\epsilon}$ as $\epsilon\to 0$; compare with
  Theorem \ref{th:zero_part}, noting that $c''\neq c'$.
\end{remark}

In a neighborhood of $\beta=1$, we actually can give a complete description 
of the transition away from $F_\beta=0$.
Define the even function 
\[ h_\beta(u) = \frac{u^2}{2\beta} - \log\cosh u~, \quad u \in\C\setminus ((-i\infty, -i\pi/2]\cap[i\pi/2, +i\infty))~.\] 
With this definition we will see in  Proposition \ref{prop:int-rep-z} that
\begin{equation}\label{eq:zn-hb}
\zn = \sqrt{\frac{N}{2\pi\beta}}\int_{-\infty}^\infty e^{-Nh_\beta(u)}\di u.
\end{equation}
In Claim \ref{cl:zeros-h} below
we show that for some $c>0$ small and  
$0 < |\beta - 1| \leq c$,
$h'_\beta(u)$ has three zeros in a neighborhood of $0$: $0, \pm u_\beta$.

\begin{theorem}\label{th:pacman} There exists $c' \leq c$ such that for $0 < |\beta - 1| \leq c'$
\begin{enumerate}\label{en:pacman}
\item\label{pacman-th:eq1} $\zn = \frac{1}{\sqrt{1 - \beta}}\left( 1 + O\left(\frac{1}{N}\right)\right)$ when $\Re\beta \leq 1$,
\item\label{pacman-th:eq2} $\zn = \frac{1}{\sqrt{1 - \beta}}\left( 1 + O\left(\frac{1}{N}\right)\right) + 2\sqrt{\frac{\beta}{\beta - \beta^2 + u_\beta^2}}e^{-Nh_\beta(u_\beta)}\left( 1 + O\left(\frac{1}{N}\right)\right)$ when $\Re\beta \geq 1$,
\end{enumerate}
and for any $\delta > 0$ the implicit constants are uniform in $\beta$ with $\delta \leq |\beta - 1| \leq c'$. 
\end{theorem}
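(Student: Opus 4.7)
The plan is to apply the method of steepest descent to the integral representation \eqref{eq:zn-hb}. By Claim \ref{cl:zeros-h}, for $|\beta-1|$ small, $h_\beta'(u)=u/\beta-\tanh u$ has exactly three zeros near $0$, namely $0$ and $\pm\ub$, with $\ub\to 0$ as $\beta\to 1$. Using $\tanh \ub=\ub/\beta$, one obtains
\[
h_\beta''(0)=\frac{1-\beta}{\beta},\qquad h_\beta''(\pm\ub)=\frac{\beta-\beta^2+\ub^2}{\beta^2}.
\]
A standard Gaussian contribution from a saddle $u_*$ traversed along its steepest descent direction gives, after multiplication by the prefactor $\sqrt{N/(2\pi\beta)}$ of \eqref{eq:zn-hb}, a term $(\beta\,h_\beta''(u_*))^{-1/2}\, e^{-Nh_\beta(u_*)}(1+O(1/N))$. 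Plugging in these formulas and using $h_\beta(0)=0$ recovers the individual summands stated in the two cases, the factor $2$ at $\pm\ub$ coming from the $u\mapsto -u$ symmetry of $h_\beta$.

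The substance lies in choosing the contour and verifying which saddles are crossed. The steepest descent direction at $u=0$ is determined by $\arg h_\beta''(0)=\arg((1-\beta)/\beta)$. When $\Re\beta\le 1$ one can draw a contour from $-\infty$ to $+\infty$ that passes through $u=0$ only, along a direction in which $\Re h_\beta$ increases on both sides, and that stays in a region where $\Re h_\beta$ is strictly positive outside a neighborhood of $0$; the Gaussian approximation at $0$ then gives the entire asymptotic, yielding the first formula. When $\Re\beta\ge 1$ such a contour cannot be drawn while remaining homotopic to the real line, and the natural replacement descends through $-\ub$, climbs through the saddle at $0$ along its new (nearly imaginary) steepest descent direction, and descends through $+\ub$, picking up all three Gaussian contributions and producing the second formula.

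The remaining ingredients are Taylor expansion of $h_\beta$ to quadratic order at each saddle, which under the scaling $u-u_*=O(N^{-1/2})$ produces the $O(1/N)$ corrections, together with tail bounds on the portions of the contour away from the saddles, where $\Re h_\beta$ can be kept bounded away from its saddle values. Uniformity of the implicit constants on $\delta\le|\beta-1|\le c'$ then follows because the three saddles are separated by distances comparable to $\sqrt{\delta}$. The principal difficulty I anticipate is the global geometry of the deformed contour: for each $\beta$ in the relevant region one must exhibit an explicit path of the required topology and verify that no other critical point of $h_\beta$ (there are infinitely many zeros of $u/\beta-\tanh u$ further up the imaginary axis) interferes with the deformation; this requires careful tracking of the level sets of $\Re h_\beta$.
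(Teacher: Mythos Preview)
Your saddle-point computations are correct, and the stated formulas do arise as the Gaussian contributions at $0$ and $\pm\ub$. However, the paper takes a substantially different route that sidesteps exactly the difficulty you flag. Rather than a direct steepest descent on $h_\beta$, the paper writes $h_\beta(u)=\hb(u^2)$ and changes variables $v=u^2$, so that $u=0$ becomes an \emph{endpoint} of $\int_0^\infty e^{-N\hb(v)}\,\frac{\di v}{\sqrt v}$ rather than a saddle. It then invokes Levinson's canonical-form theorem to produce an analytic change of variables $v=V(z,\beta)$ under which $\hb$ becomes exactly the quadratic $z^2/2-\xib z$, with $\xib$ analytic and $\xi(1)=0$. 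After this reduction the steepest-descent geometry is trivial: the contour $[0,\nu]$ is deformed to a segment $\Gamma_1$ from $0$ in the direction $-\overline{\xib}$ (this endpoint piece produces the $1/\sqrt{1-\beta}$ term via Laplace), followed by a horizontal segment $\Gamma_2$ at height $\Im\xib$. The saddle at $z=\xib$ lies on $\Gamma_2$ precisely when $\Re\xib\ge 0$, which to leading order is $\Re\beta\ge 1$. The dichotomy in the theorem thus reduces to whether a point lies on an explicit straight segment.

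The direct approach you propose runs into a genuine obstacle at the place you anticipate. For $\Re\beta>1$ the steepest-descent direction at $0$ is nearly imaginary while that at $\pm\ub$ is nearly real, and stitching these into a single contour homotopic to $\R$, with $\Re h_\beta$ controlled on the connecting arcs well enough to isolate each Gaussian with a clean $O(1/N)$ error, is not straightforward; for $\beta-1$ nearly imaginary the descent direction at $0$ is at roughly $45^\circ$ and points toward $\ub\approx\sqrt{3(\beta-1)}$, so the saddles are not well separated along any natural path. The Levinson reduction is precisely the device that untangles this: it absorbs the near-coalescence of $0$ and $\ub^2$ into the single analytic parameter $\xib$ and linearizes the level-set geometry so that the contour choice becomes explicit. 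Without it, turning your sketch into a proof with the stated uniform $O(1/N)$ errors would require a substantial case analysis of the Stokes geometry of $h_\beta$, which you have not supplied.
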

See Figure~\ref{fig:pacman} for a schematic illustration of our theorems.
We remark that on the line $\Re\beta = 1$ we will see (as a consequence of Claim
\ref{cl:crit-curve} below) that
$\Re h_\beta(\ub) > 0$ (except for $\beta = 1$). 
In particular,
the two statements in  Theorem \ref{th:pacman}
coincide  on that line.

%
\begin{figure}
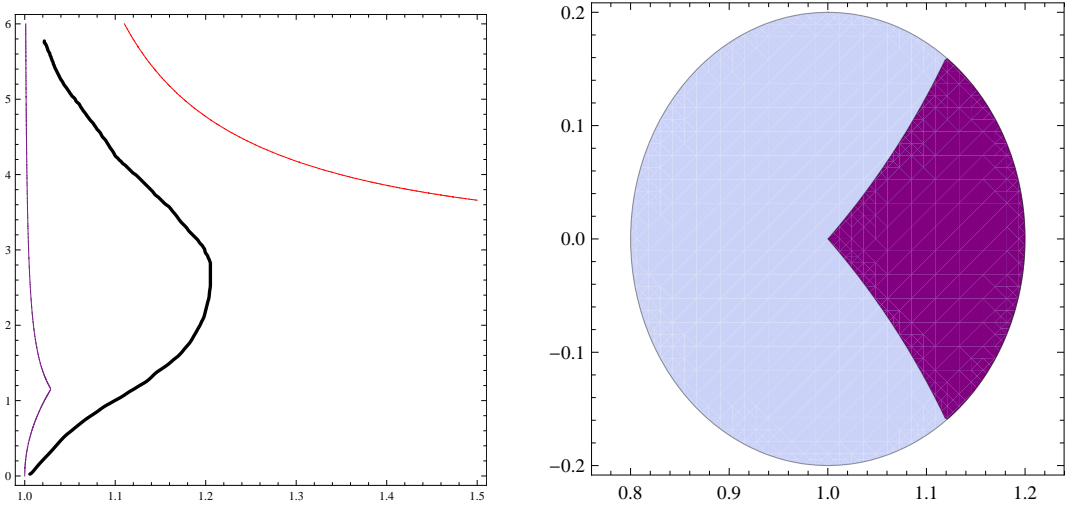
\label{fig:pacman}
\begin{center}
\includegraphics[scale=.63]{plot12}\qquad
\includegraphics[scale=1]{nearcrit}
\end{center}
\caption{Schematic illustration of our results.
  {\bf Left}: $F_\beta = 0$ to the left of the purple curve on the left (Theorem~\ref{th:zero_part});
$F_\beta < 0$ on  the red curve on the right (Theorem~\ref{th:positive_part}). We conjecture
that the two phases are separated by a curve similar to the one schematically depicted in black. The three curves are asymptotic at infinity to the line $\Re \beta = 1$.
{\bf Right}: The vicinity of the critical point $\beta=1$: here $F_\beta >0$ in the purple region on the right, and $F_\beta = 0$ in the blue region on the left.}
\end{figure}

In Theorem \ref{th:pacman}, an important role is played by 
those $\beta$ with $\Re h_\beta(\pm u_\beta)=0$. These are characterized 
by the following claim.
\begin{claim}\label{cl:crit-curve} There exist $c,C>0$ and 
  a smooth function $\epsilon\mapsto R_0(\epsilon)$ on $[-c,c]$
  such that $|R_0(\epsilon) - \epsilon| \leq C\epsilon^2$ and 
  the following holds for $\beta = 1 + \epsilon + iR$:
\begin{itemize}
\item If $|R| < |R_0(\epsilon)|$, then $\Re h_\beta(\pm u_\beta) < 0$,
\item If $|R| = |R_0(\epsilon)|$, then $\Re h_\beta(\pm u_\beta) = 0$,
\item If $|R| > |R_0(\epsilon)|$, then $\Re h_\beta(\pm u_\beta) > 0$.
\end{itemize}
\end{claim}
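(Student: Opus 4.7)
The strategy is to show that $g(\beta) := h_\beta(u_\beta)$ extends to an analytic function on a neighborhood of $\beta = 1$ with a zero of order exactly two, then to apply the implicit function theorem to the zero set of $\Re g$ after a holomorphic rescaling. The conceptual subtlety is that $u_\beta$ itself is branched at $\beta = 1$ (behaving like $\sqrt{\beta-1}$), so one must verify that nevertheless $g$ is single-valued and analytic there; once this is recognized, the square-root trick and the IFT proceed cleanly.

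At a critical point we have $u_\beta = \beta \tanh u_\beta$, hence
\[
g(\beta) \;=\; \tfrac{1}{2} u_\beta \tanh u_\beta - \log\cosh u_\beta,
\]
which is an even function of $u_\beta$. Since $u_\beta^2$ is analytic in $\beta$ with $u_\beta^2 = 3(\beta-1) + O((\beta-1)^2)$ (by Claim \ref{cl:zeros-h}, or equivalently by Lagrange inversion applied to $\tanh u/u = 1/\beta$), $g$ is analytic near $\beta = 1$. Moreover, $g$ is real-valued on the real axis: for $\beta > 1$, $u_\beta$ is real, while for $\beta < 1$, $u_\beta$ is purely imaginary, and in either case $h_\beta(u_\beta) \in \R$. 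The Taylor coefficients of $g$ at $\beta = 1$ are therefore real. Substituting $u_\beta^2 = 3(\beta-1) + \ldots$ into the even series $\tfrac{1}{2} u \tanh u - \log \cosh u = -u^4/12 + O(u^6)$ yields
\[
g(\beta) \;=\; -\tfrac{3}{4}(\beta-1)^2 + O((\beta-1)^3) \;=\; -\tfrac{3}{4}(\beta-1)^2\, G(\beta),
\]
where $G$ is analytic near $1$, real on the real axis, and $G(1) = 1$.

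Take the analytic branch of $\sqrt{G(\beta)}$ with value $1$ at $\beta = 1$, and set $\psi(\beta) := (\beta-1)\sqrt{G(\beta)}$. Then $\psi$ is biholomorphic from a neighborhood of $\beta = 1$ onto a neighborhood of $0$, with $\psi(1) = 0$, $\psi'(1) = 1$, and $\psi(\overline{\beta}) = \overline{\psi(\beta)}$ (the Taylor coefficients of $\psi$ at $1$ being real). Writing $\psi(1 + \epsilon + iR) = x(\epsilon, R) + i\, y(\epsilon, R)$, the Cauchy--Riemann equations together with $\psi'(1) = 1$ give real-analytic $x, y$ with $x = \epsilon + O(\epsilon^2 + R^2)$ and $y = R + O(\epsilon^2 + R^2)$. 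Since $g = -\tfrac{3}{4}\psi^2$, we obtain
\[
\Re g(1 + \epsilon + iR) \;=\; -\tfrac{3}{4}\bigl( x(\epsilon,R)^2 - y(\epsilon,R)^2 \bigr) \;=\; -\tfrac{3}{4}(x - y)(x + y),
\]
so the zero set of $\Re g$ becomes the union of two smooth curves $y = x$ and $y = -x$ in the $(\epsilon, R)$-plane.

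Apply the implicit function theorem to $F(\epsilon, R) := y(\epsilon, R) - x(\epsilon, R)$. From $F(0,0) = 0$ and $\partial_R F(0,0) = 1$ there is a real-analytic $R_0$ on some interval $[-c, c]$ with $F(\epsilon, R_0(\epsilon)) \equiv 0$; since $F(\epsilon, R) = (R - \epsilon) + O(\epsilon^2 + R^2)$, we get $R_0(\epsilon) = \epsilon + O(\epsilon^2)$, as required. The conjugation symmetry $g(\overline{\beta}) = \overline{g(\beta)}$ forces the other branch $y = -x$ to be $R = -R_0(\epsilon)$. The signs come from inspecting the coordinate axes: $g(1 + \epsilon) = -\tfrac{3}{4}\epsilon^2 + O(\epsilon^3)$ is negative for small nonzero real $\epsilon$, while $g(1 + iR) = \tfrac{3}{4}R^2 + O(R^3)$ has positive real part for small $R \neq 0$. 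By continuity across the two zero curves, $\Re g < 0$ on the region containing the punctured real axis (namely $|R| < |R_0(\epsilon)|$) and $\Re g > 0$ on the region containing the punctured imaginary axis (namely $|R| > |R_0(\epsilon)|$). The main obstacle is the verification of analyticity of $g$ at the branch point of $u_\beta$; everything else is a one-line IFT after the square-root change of variable.
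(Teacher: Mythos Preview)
Your argument is correct and shares its conceptual core with the paper's: both pass to a holomorphic square root of $g(\beta)=h_\beta(u_\beta)$ near $\beta=1$, under which the zero set of $\Re g$ becomes the preimage of the diagonals $\arg=\pm\pi/4$, and the asymptotic $R_0(\epsilon)=\epsilon+O(\epsilon^2)$ follows. The difference is in how this square root is produced. The paper imports it as $\xi(\beta)=\sqrt{-2h_\beta(u_\beta)}$ from the Levinson reduction already set up in the proof of Theorem~\ref{th:pacman}; you construct the equivalent biholomorphism $\psi(\beta)=(\beta-1)\sqrt{G(\beta)}$ directly from the factorization $g(\beta)=-\tfrac{3}{4}(\beta-1)^2G(\beta)$ (indeed $\xi=\sqrt{3/2}\,\psi$). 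This makes your proof self-contained and more elementary---it does not rely on Levinson's theorem---at the cost of redoing a small piece of work. You are also more explicit than the paper on two points it leaves implicit: the analyticity of $g$ at the branch point $\beta=1$ (via the observation that $g$ is even in $u_\beta$, hence a function of the analytic quantity $u_\beta^2$), and the verification of the three sign inequalities by sampling $\Re g$ on the real and imaginary directions and invoking continuity across the two zero curves.
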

For $c$ as above, define the \textit{critical curve} 
$\Gamma = \{\beta = 1 + \epsilon + i R\, |\,\, 0 \leq\epsilon \leq c,\,\, 
R = \pm R_0(\epsilon) \}$.
Theorem \ref{th:pacman}
allows us to describe the location of zeros of $\zn$ (the Fisher zeros), and show
that in a neighborhood of $\beta=1$,
they are close to the critical curve $\Gamma$. Define 
\[
\psin = \frac{1}{\sqrt{1 - \beta}} + 2\sqrt{\frac{\beta}{\beta - \beta^2 + u_\beta^2}}e^{-Nh_\beta(u_\beta)},\,\, \Re\beta \geq 1.
\]
The zeros of $\psin$ 
near $\beta = 1$ lie near the critical curve $\Gamma$; we will show that
the zeros of $\zn$ are close to those zeros.
\begin{corollary}\label{cor:zeros-of-zn} For any $\delta > 0$ the following holds for $N \geq N_0(\delta)$. The zeros of $\zn$ in $\delta < |\beta - 1| < c'$ lie in $\Re\beta > 1$, and for any zero $\beta$ of $\zn$ there exists a unique zero $\beta'$ of $\psin$ such that $|\beta - \beta'| < \frac{C_\delta}{N^2}$. Vice versa, for any zero $\beta'$ of $\psin$ with $\delta < |\beta' - 1| < c'$ there exists a unique zero $\beta$ of $\zn$ with $|\beta - \beta'| < \frac{C_\delta}{N^2}$. In particular, the zeros of $\zn$ lie within $C_\delta N^{-1}$ from the critical curve.
\end{corollary}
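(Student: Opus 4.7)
The plan is to view $\zn$ as a perturbation of $\psin$ on the annulus $\mathcal{A}_\delta := \{\delta < |\beta-1| < c'\}$ and apply Rouché's theorem at each zero of $\psin$. Set $R_N(\beta) := \zn - \psin$ for $\Re\beta \ge 1$. The half-disc $\Re\beta \le 1$ is immediate: Theorem~\ref{th:pacman}(\ref{pacman-th:eq1}) gives $|\zn| \ge |1-\beta|^{-1/2}(1 - O(1/N)) \ge c_\delta > 0$ for $N$ large, so no zeros occur there. For $\Re\beta \ge 1$, Theorem~\ref{th:pacman}(\ref{pacman-th:eq2}) provides the uniform error bound
\begin{equation*}
|R_N(\beta)| \le \frac{C}{N}\left(\frac{1}{|1-\beta|^{1/2}} + \left|2\sqrt{\frac{\beta}{\beta - \beta^2 + \ub^2}}\right| e^{-N\Re h_\beta(\ub)}\right).
\end{equation*}

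Write $\psin(\beta) = A(\beta) + B(\beta) e^{-N g(\beta)}$ with $A(\beta) = (1-\beta)^{-1/2}$, $B(\beta) = 2\sqrt{\beta/(\beta - \beta^2 + \ub^2)}$, and $g(\beta) := h_\beta(\ub)$, all holomorphic in $\mathcal{A}_\delta$. Because $h'_\beta(\ub) = 0$, $g'(\beta) = \partial_\beta h_\beta|_{u=\ub} = -\ub^2/(2\beta^2)$, which is bounded above and below in $\mathcal{A}_\delta$ since $\ub^2 \asymp \beta - 1$. A zero $\beta'$ of $\psin$ solves $e^{-N g(\beta')} = -A(\beta')/B(\beta')$, whence $\Re g(\beta') = N^{-1}\log|B(\beta')/A(\beta')| = O(1/N)$; combined with $|g'|\asymp 1$ and Claim~\ref{cl:crit-curve}, this forces $\beta'$ to lie within $O(1/N)$ of $\Gamma$. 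Differentiating $\psin$ at $\beta'$ and using $B(\beta')e^{-Ng(\beta')} = -A(\beta')$ gives $\psin'(\beta') = Ng'(\beta')A(\beta') + O(1)$, so $|\psin'(\beta')|\asymp_\delta N$; in particular all zeros are simple, and by the implicit-function theorem applied to $Ng(\beta) = \log(-B(\beta)/A(\beta)) + 2\pi i k$ ($k\in\Z$) consecutive zeros are spaced $\asymp 1/N$ apart along $\Gamma$.

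Now apply Rouché. On the circle $|\beta-\beta'| = r_0/N^2$ for $r_0 = r_0(\delta)$ sufficiently large, a Taylor expansion yields $|\psin(\beta)| \ge \tfrac12|\psin'(\beta')|\cdot r_0/N^2 \ge c_\delta r_0/N$, which beats the uniform bound $|R_N| \le C_\delta/N$; hence $\zn$ has a unique zero in $D(\beta', r_0/N^2)$, giving the ``vice versa'' direction and the quantitative $C_\delta/N^2$ bound. The reverse direction requires $|\psin(\beta)| > |R_N(\beta)|$ on the complement of $\bigcup_{\beta'} D(\beta', r_0/N^2)$ inside $\mathcal{A}_\delta$; then Rouché on that complement prevents any extra zeros of $\zn$. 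Outside the ``resonance strip'' $\{|\Re g(\beta)|\le C_1/N\}$---which has width $O(1/N)$ around $\Gamma$ since $|g'|\asymp 1$---one of the two summands dominates the other by an unbounded factor, so $|\psin|\gtrsim 1$. Inside the strip, the spacing $\asymp 1/N$ of zeros together with $|\psin'|\asymp N$ forces $|\psin(\beta)|\gtrsim 1/N$ off the small discs.

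The main obstacle is the lower bound on $|\psin|$ inside the resonance strip: outside it, dominance by one summand is immediate, but inside one must combine the $\asymp 1/N$ spacing of zeros of $\psin$ with $|\psin'|\asymp N$ to rule out near-cancellations beyond the $O(1/N^2)$ neighborhoods. Once this lower bound is in place, the two Rouché applications set up a bijection between the zeros of $\zn$ and those of $\psin$ with paired zeros at distance $\le C_\delta/N^2$; since $\psin$-zeros themselves lie within $O(1/N)$ of $\Gamma$, the triangle inequality gives the claimed $C_\delta/N$ bound.
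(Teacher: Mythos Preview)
Your proposal is correct and follows essentially the same route as the paper: both compute $g'(\beta)=-\ub^2/(2\beta^2)$, deduce that zeros of $\psin$ lie within $O(1/N)$ of $\Gamma$ with $|\psin'|\asymp N$ there, and then apply Rouch\'e on discs of radius $\asymp N^{-2}$ around each zero of $\psin$.

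The one substantive difference is in the ``no extra zeros'' step. You aim for a pointwise lower bound $|\psin|\gtrsim N^{-1}$ everywhere in the resonance strip off the small discs; you correctly flag this as the main obstacle but do not supply the missing ingredient, namely a uniform bound $|\psin''|\le CN^2$ in the strip (without it the Taylor estimate $|\psin|\ge c N|\beta-\beta_0|$ is only valid on a disc of uncontrolled radius around each zero $\beta_0$). The paper proves this second-derivative bound explicitly (its Claim~\ref{cl:psi-properties}) and then sidesteps the full pointwise estimate by applying Rouch\'e once more on a slightly perturbed global domain $\widetilde{\mathcal D}_\delta$ whose boundary is chosen to stay at distance $\ge C_2/N$ from every zero of $\psin$; this reduces the required lower bound on $|\psin|$ to a small set (two short arcs where the boundary crosses the strip) and avoids having to control $|\psin|$ at distances comparable to the inter-zero spacing. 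Your direct approach also goes through once the $|\psin''|$ bound is added, but the paper's device is a little cleaner.
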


We also obtain information on the empirical measure of zeros of
$Z_N$, in a neighborhood of the critical point $\beta=1$. For $c'>0$ small,
introduce the
scaled zero-counting measure
\begin{equation}\label{eq:zero-count-mes}
\mu_N = \frac{1}{N} \sum_{\beta:\, |\beta - 1|\leq c',\,\, \zn = 0}\delta_\beta.
\end{equation}
Define a positive measure $\mu$ on $\C$,
supported on $\Gamma_{c'}:=\Gamma\cap\{|\beta - 1|\leq c' \}$ as follows: 
For a segment $I$ of $\Gamma$ connecting $a_j=1+\epsilon_j+iR_0(\epsilon_j)
\in \Gamma_{c'}$,
$j=1,2$, with $\epsilon_2>\epsilon_1\geq 0$, 
set
\begin{equation}\label{eq:limit-mes}
  \mu(I) = \frac{1}{2\pi}(\Im h_{a_2}(u_{a_2}) - \Im h_{a_1}(u_{a_1})),
\end{equation}
and extend $\mu$ by symmetry to the lower half plane.
One checks that $\mu$ is a finite positive measure on $\C$.

\begin{corollary}\label{cor:emp-mes} $\mu_N \underset{N\to\infty}{\longrightarrow}
  \mu$ in the weak topology for 
  positive measures on $\C$.
\end{corollary}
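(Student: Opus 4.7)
The strategy is to reduce, via Corollary~\ref{cor:zeros-of-zn}, to counting zeros of the explicit function $\psin$, to describe those zeros as a $\mathbb{Z}$-indexed family strung along the critical curve $\Gamma$ with the correct spacing, and finally to handle the shrinking neighborhood of $\beta=1$ separately.

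Fix $\delta>0$ small. By Corollary~\ref{cor:zeros-of-zn}, for $N$ large the zeros of $\zn$ and of $\psin$ in the annulus $A_\delta:=\{\delta<|\beta-1|<c'\}$ are in canonical bijection with mutual displacement $O_\delta(N^{-2})$; hence the restrictions of the corresponding scaled zero-counting measures have the same weak limit on $A_\delta$. Write $\psin(\beta)=0$ as
\[
Nh_\beta(u_\beta)+\log g(\beta)+2\pi i k=0,\qquad k\in\mathbb{Z},\qquad g(\beta):=-\frac{1}{2\sqrt{1-\beta}}\sqrt{\frac{\beta-\beta^2+u_\beta^2}{\beta}},
\]
where a holomorphic branch of $\log g$ is fixed on a simply-connected neighborhood of $\Gamma\cap A_\delta$ (on which $g$ is non-vanishing). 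By Claim~\ref{cl:crit-curve}, $\Gamma$ coincides with $\{\Re h_\beta(u_\beta)=0\}$ in this region, and the Taylor expansion $h_\beta(u_\beta)=-\frac{3(\beta-1)^2}{4\beta^2}+O((\beta-1)^3)$ (using $u_\beta^2=3(\beta-1)/\beta+O((\beta-1)^2)$) shows that $\partial_\beta h_\beta(u_\beta)$ does not vanish on $\Gamma\cap A_\delta$. Consequently $H(\beta):=h_\beta(u_\beta)$ is a local biholomorphism there, and the implicit function theorem produces, for each admissible integer $k$, exactly one zero $\beta_k=\beta_k(N)$ within $O_\delta(N^{-1})$ of $\Gamma$, characterized by $\Im H(\beta_k)$ forming a sequence equi-spaced at increments $2\pi/N$ up to $O(N^{-1})$ corrections.

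Parametrizing $\Gamma$ by $\epsilon\mapsto a(\epsilon):=1+\epsilon+iR_0(\epsilon)$, the number of $\beta_k$ in a neighborhood of the sub-arc joining $a(\epsilon_1)$ to $a(\epsilon_2)$ equals $\tfrac{N}{2\pi}\bigl|\Im H(a(\epsilon_2))-\Im H(a(\epsilon_1))\bigr|+O(1)$; dividing by $N$ matches the definition of $\mu$ in \eqref{eq:limit-mes} and gives weak convergence of $\mu_N$ on $A_\delta$ to $\mu|_{A_\delta}$. To pass to the full disk $\{|\beta-1|\leq c'\}$ one must control the shrinking region $\{|\beta-1|\leq\delta\}$. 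The same expansion yields $|H(\beta)|=O(|\beta-1|^2)$, so the total variation of $\Im H$ along $\Gamma\cap\{|\beta-1|\leq\delta\}$ is $O(\delta^2)$; this gives both $\mu(\{|\beta-1|\leq\delta\})=O(\delta^2)$ and a bound of $O(\delta^2 N)+O(1)$ on the zeros of $\psin$ in this disk. A Rouch\'e argument on the circle $|\beta-1|=\delta$, where Theorem~\ref{th:pacman} supplies $|\zn-\psin|\ll|\psin|$, transfers the bound to $\zn$, and letting $\delta\downarrow 0$ concludes the weak convergence.

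The main obstacle is the final step: Corollary~\ref{cor:zeros-of-zn} is non-uniform as $\delta\downarrow 0$, and $H$ degenerates at $\beta=1$, so a pointwise pairing of zeros cannot be extended into the small disk. The remedy is to use the argument principle (via Theorem~\ref{th:pacman}) to count zeros of $\zn$ in $\{|\beta-1|\le\delta\}$ by those of $\psin$, and to exploit the quadratic vanishing of $h_\beta(u_\beta)$ at $\beta=1$ so that both tail masses are $O(\delta^2)$ uniformly in $N$. A secondary technical point is verifying $\partial_\beta H\neq 0$ on $\Gamma\setminus\{1\}$, which is needed for the implicit function/spacing argument that produces the $\mathbb{Z}$-indexed family.
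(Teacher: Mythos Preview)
Your plan tracks the paper's proof closely: the paper likewise reduces to the zeros of $\psin$ via Corollary~\ref{cor:zeros-of-zn}, locates them as a $\mathbb{Z}$-indexed family along $\Gamma$ using the biholomorphism (the paper works with $\xi(\beta)$ and $\xi'(1)=\sqrt{3/2}\neq0$, which is equivalent to your $\partial_\beta H\neq0$), and uses $|h_\beta(u_\beta)|=O(|\beta-1|^2)$ to show $\mu\{|\beta-1|<\delta\}=O(\delta^2)$.

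The one substantive divergence is in bounding $\limsup_{N}\mu_N\{|\beta-1|\leq\delta\}$. You propose a Rouch\'e/argument-principle comparison of $\zn$ with $\psin$ on the circle $|\beta-1|=\delta$; the paper instead applies Jensen's inequality directly to $\zn$. Your route has two snags. First, $\psin$ is not holomorphic on the small disk: both $(1-\beta)^{-1/2}$ and, since $\beta-\beta^2+u_\beta^2\sim 2(\beta-1)$, the second summand carry a $(\beta-1)^{-1/2}$ branch-point singularity at $\beta=1$, so the argument principle for $\psin$ on $\{|\beta-1|\leq\delta\}$ is not available as stated. Second, Theorem~\ref{th:pacman} does \emph{not} give $|\zn-\psin|\ll|\psin|$ everywhere on the circle: near the two points where $\Gamma$ meets $|\beta-1|=\delta$, $\psin$ has zeros within $O(N^{-1})$ of the circle, and there $|\psin|$ can be smaller than the $O(N^{-1})$ error. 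One would have to perturb the contour (as in the proof of Corollary~\ref{cor:zeros-of-zn}) and then still contend with the branch point. The paper's Jensen argument sidesteps both issues cleanly: it only needs $\max_{|\beta-1|=2\delta}|\zn|\leq e^{CN\delta^2+O(\log N)}$ from Theorem~\ref{th:pacman} and the real-$\beta$ lower bound $|Z_N(1)|\geq N^{-C}$, yielding $n_N(\delta)\leq CN\delta^2+O(\log N)$ and hence $\mu_N\{|\beta-1|\leq\delta\}\leq C\delta^2+o(1)$.
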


We remark that the Fisher zeros in the vicinity of the
critical point $\beta=1$ were recently studied by
Krasnytska et al.\ \cite{KBHK}. One of their
results, derived on the level of rigor customary in the physics literature, describes the asymptotics of the
$j$-th zero in the limiting regime at which $N \to \infty$ and then $j \to \infty$ (see op.\ cit., eq. (28)).
As a consequence, they find that the zeros pinch
the real axis at angle $\pi/4$, which is consistent
with our results. See also \cite{GPS} for an earlier reference.

\smallskip
The results above do not completely characterize the phase diagram of
the Curie--Weiss model. In Section 
\ref{sec-open}, we discuss this point and present a conjecture
for the critical curve separating the region where the free energy 
vanishes asymptotically from that where it is strictly positive.

\section{Integral representation and preliminaries}
The proofs of all of the theorems are based on the saddle-point analysis of the following integral representation.
\begin{proposition}\label{prop:int-rep-z} If $\Re \beta > 0$, then
\begin{equation}\label{eq:int-rep-z}
\zn = 
\left(\frac{\beta N}{2\pi}\right)^{1/2} \int_{-\infty}^{\infty} \exp(-Nf_\beta(u)) \di u =
\left(\frac{N}{2\pi\beta}\right)^{1/2} \int_{-\infty}^{\infty} \exp(-Nh_\beta(u)) \di u 
,
\end{equation}
where 
\begin{equation}\label{eq:f-beta}
f_\beta(u) = \frac{\beta u^2}{2} - \log(\cosh(\beta u)),
\end{equation}
and
\begin{equation}\label{eq:h-beta}
h_\beta(u)=\frac{u^2}{2\beta} - \log \cosh u,
\end{equation}
and the branch of the square root is choosen so that $\sqrt 1 = 1$.
\end{proposition}
\begin{proof}Let $X_1, X_2, \dots, X_N$ be independent identically distributed Bernoulli random variables: $\P\{X_j = 1\} = \P\{X_j = -1\} = \frac{1}{2}$,\ and let $\beta\in\C$. Then,
  using $\E$ to denote expectation with respect to these random variables,
  we have
\begin{equation*}
  \begin{split}
\zn &=  
\E \exp\left(\frac{\beta N}{2}\left( \frac{1}{N}\sum_{j=1}^N X_j\right)^2
\right)=
 \E\int \exp\left( -\frac{u^2}{2} + u\sqrt{\beta N}\frac{\sum X_j}{N}\right)\frac{\di u}{\sqrt{2\pi}}   \\ &=  
 \sqrt{\frac{\beta N}{2\pi}}\int \di\tilde{u} \exp\left( -\frac{\beta N}{2}\tilde{u}^2\right) \E \exp\left(\beta\tilde{u}\sum_{j=1}^N X_j\right),
\end{split}
\end{equation*}
where the second equality uses the
Hubbard-Stratonovich transformation and the last uses the
change of variables $\tilde{u} = \frac{u}{\sqrt{\beta N}}$. 
Since for any $a$ we have $\E\exp(aX) = \cosh a$ 
and using the assumption that $X_j$ are i.i.d random variables, we obtain
\[
\E \exp\left(\beta\tilde{u}\sum_{j=1}^N X_j\right) = \prod_{j=1}^N \E \exp\left(\beta\tilde{u} X_j\right) = \left( \E \exp\left(\beta\tilde{u} X_j\right) \right)^N = \left(\cosh(\beta\tilde{u})\right)^N.
\]
Combining the last two displays gives 
\begin{equation*}
\zn=
\sqrt{\frac{\beta N}{2\pi}}
\int \di\tilde{u} \exp\left( -\frac{\beta N}{2}\tilde{u}^2 + N\log\cosh(\beta\tilde{u})\right)=
\sqrt{\frac{\beta N}{2\pi}}
\int \exp(-N f_\beta(u))du
.
\end{equation*}
\end{proof}

\section{Proof of Theorem \ref{th:zero_part}}
The proof of Theorem \ref{th:zero_part} for $\epsilon< 0$ follows
from known asymptotics for the Curie--Weiss model. Indeed,
for such $\epsilon$ and with $R=0$ and $\sigma_\epsilon^2=1/(1-\epsilon)$,
we have by \cite[Theorem 2]{ML} that 
$Z_{1+\epsilon,N}\asymp \sigma_\epsilon $ and, under the measure
$e^{-(1+\epsilon) H_{N}(\sigma)}/Z_{1+\epsilon,N}$, we have by 
\cite{EN} that $\sqrt{N}m_N$ 
converges in
 distribution to a centered Gaussian random variable
  of variance $\sigma_\epsilon^2:=-1/\epsilon$. We then obtain
  that with $\epsilon <0$, 
  $$Z_{\beta,N}  \asymp \frac{1}{\sqrt{2\pi}}\int 
  e^{-(1-\epsilon)u^2/2-iRu^2/2} du\,,$$
  which gives the claim.


The proof in case $\epsilon>0$ follows a saddle-point 
analysis of the integral representation from Proposition \ref{prop:int-rep-z}. 
Throughout, $C_i$ denote constants that may depend on $\epsilon$ and $R$
but not on anything else.
The following preliminary claims play an important role in the analysis. 
\begin{claim}\label{claim:w1} For any $\epsilon\leq\frac{1}{9}$, 
  $u\leq \sqrt{8\epsilon}$, $\sqrt{24\epsilon}\leq R\leq \frac{\pi}{\sqrt
    {128
    \epsilon}}$,
\[
\Re f_\beta(u) = (1 + \epsilon)\frac{u^2}{2} - \frac{1}{2}\log (\cosh^2(u(1 +\epsilon)) - \sin^2(uR)) \geq \epsilon\frac{u^2}{2}.
\]
\end{claim}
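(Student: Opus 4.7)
The plan is first to reduce the claimed inequality to a clean symmetric form, and then to prove that form via Taylor expansion together with careful accounting of the constants coming from the range constraints on $u$ and $R$.

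I would establish the identity for $\Re f_\beta(u)$ by direct computation: writing
\[
\cosh((1+\epsilon)u + iRu) = \cosh((1+\epsilon)u)\cos(Ru) + i\sinh((1+\epsilon)u)\sin(Ru)
\]
and using $\cosh^2 - \sinh^2 = 1$ gives $|\cosh(\beta u)|^2 = \cosh^2((1+\epsilon)u) - \sin^2(Ru)$, so $\Re\log\cosh(\beta u) = \tfrac12\log(\cosh^2((1+\epsilon)u) - \sin^2(Ru))$; combined with $\Re(\beta u^2/2) = (1+\epsilon)u^2/2$, this yields the displayed formula. The inequality $\Re f_\beta(u) \geq \epsilon u^2/2$ then becomes
\[
\cosh^2((1+\epsilon)u) - \sin^2(Ru) \leq e^{u^2},
\]
which by the duplication identities $\cosh^2 x = (1+\cosh 2x)/2$ and $\sin^2 y = (1-\cos 2y)/2$ is equivalent to
\[
\cosh(2(1+\epsilon)u) + \cos(2Ru) \leq 2 e^{u^2}.
\]

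Next I would Taylor-expand both sides in powers of $u^{2k}$ and compare coefficients. The difference $2e^{u^2} - \cosh(2(1+\epsilon)u) - \cos(2Ru)$ has vanishing constant term, and its $u^2$-coefficient equals $2(R^2 - 2\epsilon - \epsilon^2)$. Under the hypotheses $R^2 \geq 24\epsilon$ and $\epsilon \leq 1/9$ this is bounded below by $42\epsilon$, giving a positive slack of at least $42\epsilon u^2$ against which to balance the higher-order terms. For $k \geq 2$, the negative contributions to the above difference are controlled by $[\cosh(2(1+\epsilon)u) - 1 - 2(1+\epsilon)^2 u^2] + [\cosh(2Ru) - 1 - 2R^2 u^2]$, each a tail of the $\cosh$ series starting at order $u^4$. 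The range restrictions give $2(1+\epsilon)u \leq 2.1$ and, crucially, $2Ru \leq \pi/2$ (from $u \leq \sqrt{8\epsilon}$ and $R \leq \pi/\sqrt{128\epsilon}$), so standard geometric-tail estimates bound each term by a numerical multiple of $(1+\epsilon)^4 u^4$ and $R^4 u^4$ respectively. These I would then absorb into the slack by writing $(1+\epsilon)^4 u^4 \leq C_1 \epsilon u^2$ (using $u^2 \leq 8\epsilon$) and $R^4 u^4 = (Ru)^2\, R^2 u^2 \leq (\pi/4)^2 R^2 u^2$; since $R^2 \geq 24\epsilon$, a short arithmetic check shows the resulting correction is strictly smaller than the $42\epsilon u^2$ slack.

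The main obstacle will be this last arithmetic. A crude $R^4 u^4$ bound blows up as $\epsilon \to 0$ with $R$ near the top of its range, and the key trick is to factor out one $R^2 u^2$ that the slack can absorb. This is possible only because the hypothesis $R \leq \pi/\sqrt{128\epsilon}$ forces $(Ru)^2 \leq (\pi/4)^2$, a fixed constant strictly smaller than the available slack coefficient; both regime constraints $R \geq \sqrt{24\epsilon}$ and $R \leq \pi/\sqrt{128\epsilon}$ thus enter essentially into the numerical balance.
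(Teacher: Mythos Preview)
Your approach is close in spirit to the paper's: both reduce to showing $\cosh^2((1+\epsilon)u) - \sin^2(Ru) \leq e^{u^2}$ via Taylor expansion, using $u^2 \leq 8\epsilon$ and $Ru \leq \pi/4$. The paper organizes the estimate a bit more directly, bounding $\cosh^2((1+\epsilon)u) \leq 1 + u^2 + 12\epsilon u^2$ from a fourth-order $\cosh$ bound and $\sin^2(Ru) \geq R^2 u^2/2 \geq 12\epsilon u^2$, so that the extra $12\epsilon u^2$ cancels and one lands on $1 + u^2 \leq e^{u^2}$ with no further arithmetic. Your duplication-formula rewrite $\cosh(2(1+\epsilon)u)+\cos(2Ru)\leq 2e^{u^2}$ and term-by-term comparison is a legitimate alternative route to the same inequality.

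One step needs correcting. In your final balance you compare the $R^4 u^4$ correction, which you have bounded by a constant times $(\pi/4)^2 R^2 u^2$, against ``the $42\epsilon u^2$ slack''. This comparison fails when $R$ is near the top of its allowed range: for $R^2$ of order $1/\epsilon$ the correction is of order $R^2 u^2$, which is much larger than $\epsilon u^2$. The fix is not to discard the full slack $2(R^2 - 2\epsilon - \epsilon^2)u^2$ in favor of its lower bound $42\epsilon u^2$. Since $2\epsilon + \epsilon^2 \leq R^2/10$ under the hypotheses, the slack is actually at least $1.8\, R^2 u^2$, and the two corrections---roughly $0.45\, R^2 u^2$ from the $\cos$ tail and $O(\epsilon u^2) \leq 0.4\, R^2 u^2$ from the $\cosh$ tail---fit comfortably beneath it. With that adjustment your argument goes through; the phrase ``since $R^2\geq 24\epsilon$'' in your last line should be invoked to lower-bound the slack in units of $R^2 u^2$, not to compare the correction to $42\epsilon u^2$.
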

\begin{proof} By Taylor expansion,
\[
\cosh t \leq 1 + \frac{t^2}{2} + \cosh t \frac{t^4}{24} \leq 1 + \frac{t^2}{2} + \frac{t^4}{12}
\]
where the last inequality used that $t\leq 3/2$ and therefore
$\cosh t\leq 2.4$. Using again $t<3/2$ we obtain
\[
\cosh^2 t \leq 1 + t^2 + \frac{3}{4} t^4.
\]
From the assumptions we get that $(1+\epsilon)u\leq
(1+\epsilon)\sqrt{8\epsilon}<3/2$. Therefore, using again
$\epsilon<1/9$,
\[
\cosh^2 ((1 +\epsilon)u)  \leq 1 + u^2 +\epsilon u^2\left(2 +\epsilon + 
\frac{3 u^2 (1+\epsilon)^4}{4\epsilon}\right)\leq
1+ u^2 + 12\epsilon u^2.
\]
For $Ru\leq\frac{\pi}{4}$ we have $\sin^2 (Ru)\geq {R^2 u^2}/{2}$, hence
\[
\cosh^2 ((1 +\epsilon)u) - \sin^2 (Ru) \leq 1 + u^2 + 12\epsilon u^2 - \frac{R^2 u^2}{2}. 
\]
Since $R\geq\sqrt{24\epsilon}$, we have
${R^2 u^2}/{2} \geq 12 \epsilon u^2$ and therefore
\[
\cosh^2 ((1 +\epsilon)u) - \sin^2 (Ru) \leq 1 + u^2 \leq e^{u^2},
\] 
and therefore
\[
\Re f_\beta(u) = (1 + \epsilon)\frac{u^2}{2} - \frac{1}{2}\log (\cosh^2(u(1 +\epsilon)) - \sin^2(uR)) 
\geq \epsilon \frac{u^2}{2}.
\]
\end{proof}
The next claim handles larger values of the argument $u$.
\begin{claim}\label{lemma:cosh_estimate}
  Let $\epsilon<1/9$.
For any $t\geq (1+\epsilon)
\sqrt{8\epsilon}$,
\begin{equation}
  \label{eq-prelim2a}
\cosh t \leq \exp\left((1-\epsilon)\frac{t^2}{2}\right).
\end{equation}
In particular, for $u\geq \sqrt{8\epsilon}$, 
\begin{equation}
  \label{eq-prelim2}
  \Re f_\beta(u)\geq (1+\epsilon)\frac{u^2}{2}-\log (\cosh(u(1+\epsilon))\geq 
  \frac{\epsilon^2 u^2}{2}.
\end{equation}
\end{claim}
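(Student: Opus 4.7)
The plan is to establish the scalar bound $\cosh t \leq \exp((1-\epsilon)t^2/2)$ for $t \geq t_0 := (1+\epsilon)\sqrt{8\epsilon}$ first, and then deduce the statement on $\Re f_\beta(u)$ by bounding $\Re \log\cosh(\beta u)$ from above by $\log\cosh((1+\epsilon)u)$ and applying the scalar bound with $t = (1+\epsilon)u$.

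For the scalar bound, the key observation is that $\psi(t) := \log\cosh t / t^2$ is strictly decreasing on $(0, \infty)$. Writing $t^3 \psi'(t) = F(t) := t \tanh t - 2\log\cosh t$, one checks $F(0) = F'(0) = 0$ and $F''(t) = -2 t \tanh t \operatorname{sech}^2 t < 0$ for $t>0$; hence $F < 0$ on $(0,\infty)$ and so $\psi' < 0$ there. By monotonicity it suffices to verify $\psi(t_0) \leq (1-\epsilon)/2$, i.e.\ $\log \cosh t_0 \leq (1-\epsilon)t_0^2/2$.

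I would verify this endpoint bound via the Taylor expansion $\log\cosh t = t^2/2 - t^4/12 + t^6/45 - 17 t^8/2520 + \cdots$, which alternates in sign with terms decreasing in absolute value for $t$ in the relevant range (note $t_0 \leq (10/9)\sqrt{8/9} < 1.05$ when $\epsilon \leq 1/9$). Hence $\log\cosh t_0 \leq t_0^2/2 - t_0^4/12 + t_0^6/45$, and the required inequality reduces to $\epsilon/2 \leq t_0^2/12 - t_0^4/45$. Substituting $t_0^2 = 8\epsilon(1+\epsilon)^2$ and dividing by $\epsilon$ turns this into an explicit polynomial inequality in $\epsilon$ alone, which is elementary to verify for all $\epsilon \in (0, 1/9]$.

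For the second statement, the identity $|\cosh(\beta u)|^2 = \cosh^2((1+\epsilon)u) - \sin^2(Ru)$ used in Claim~\ref{claim:w1} gives $\Re \log\cosh(\beta u) \leq \log\cosh((1+\epsilon)u)$. Under the hypothesis $u \geq \sqrt{8\epsilon}$ one has $(1+\epsilon)u \geq t_0$, so \eqref{eq-prelim2a} applies to yield
\[
\Re f_\beta(u) \geq \frac{(1+\epsilon)u^2}{2} - \log\cosh((1+\epsilon)u) \geq \frac{(1+\epsilon)u^2}{2}\bigl(1 - (1-\epsilon)(1+\epsilon)\bigr) = \frac{(1+\epsilon)\epsilon^2 u^2}{2} \geq \frac{\epsilon^2 u^2}{2}.
\]
The main obstacle is the endpoint verification: the bound $\log\cosh t_0 \leq (1-\epsilon)t_0^2/2$ must hold uniformly across the entire range $\epsilon \in (0, 1/9]$ and not merely for small $\epsilon$, which requires careful bookkeeping of the Taylor remainder at the upper end of the interval.
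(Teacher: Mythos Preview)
Your argument is correct, and the deduction of \eqref{eq-prelim2} from \eqref{eq-prelim2a} is exactly the paper's. For \eqref{eq-prelim2a} itself, however, the paper takes a different and more direct route: it compares the Taylor series of $\cosh t$ and of $\exp((1-\epsilon)t^2/2)$ term by term. For $k\geq 3$ the elementary bound $(2k)!/(2^k k!)\geq 2^k\geq (1-\epsilon)^{-k}$ handles the tails, and the hypothesis $t\geq (1+\epsilon)\sqrt{8\epsilon}$ is precisely what makes the $k=2$ comparison
\[
\frac{t^4}{8}(1-\epsilon)^2 \;\geq\; \frac{t^4}{24}+\frac{\epsilon t^2}{2}
\]
go through (after using $\epsilon t^2/2\leq t^4/(16(1+\epsilon)^2)$). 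This yields the inequality for all $t\geq t_0$ at once, with nothing deferred. Your route---observing that $\psi(t)=\log\cosh t/t^2$ is strictly decreasing and then checking only the endpoint $t_0$---is conceptually cleaner and the monotonicity lemma is a nice ingredient the paper does not use; the price is that you must still carry out the endpoint verification (the alternating-remainder bound for $\log\cosh$ and the resulting polynomial inequality in $\epsilon$) uniformly on $(0,1/9]$, which you correctly flag as the remaining bookkeeping. Either approach works; the paper's is fully explicit from the outset, while yours isolates the analytic content more sharply but leaves a small numerical check.
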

\begin{proof}
By Taylor expansion we have
\[
\cosh t = \sum_{k=0}^\infty \frac{t^{2k}}{(2k)!} = 1+ \frac{t^2}{2}(1-\epsilon) +\left[ \epsilon\frac{t^2}{2} + \frac{t^4}{24}\right] + \sum_{k\geq 3}^\infty \frac{t^{2k}}{(2k)!},
\]
and
\[
\exp\left((1 - \epsilon)\frac{t^2}{2}\right) = 1+ \frac{t^2}{2}(1-\epsilon)
+ \frac{t^4}{8}(1-\epsilon)^2 +  \sum_{k\geq 3}^\infty \frac{t^{2k}(1 - \epsilon)^k}{2^kk!}.
\]
For $k \geq 3$ we have, if $\epsilon\leq 1/2$,
\[
\frac{(2k)!}{2^kk!} = \frac{(k + 1) \cdots (2k)}{2^k} \geq 2^k \geq \frac{1}{(1 - \epsilon)^k}.
\]
Therefore
\[
\sum_{k\geq 3}^\infty \frac{t^{2k}}{(2k)!} \leq \sum_{k\geq 3}^\infty \frac{t^{2k}(1 - \epsilon)^k}{2^kk!},
\]
and, since $\epsilon\leq \frac{1}{9}$ and $t\geq (1+\epsilon)\sqrt{8\epsilon}$,
\[
\frac{t^4}{8}(1 - \epsilon)^2 - \frac{t^4}{24} - \frac{\epsilon
t^2}{2} \geq \frac{t^4}{8}\left((1-\epsilon)^2-\frac13-
\frac{1}{2(1+\epsilon)^2}
\right)
\geq 0.
\]
This completes the proof of \eqref{eq-prelim2a}.

To see \eqref{eq-prelim2}, take $t=(1+\epsilon)u$, which satisfies the
assumptions leading to \eqref{eq-prelim2a}. Then,
\begin{equation}\label{eq:cosh_estim}
\cosh((1+\epsilon)u)\leq \exp\left((1-\epsilon)(1+\epsilon)^2\frac{u^2}{2}\right) \leq \exp\left(\frac{(1+\epsilon)(1-\epsilon^2)u^2}{2}\right).
\end{equation}   
Hence, we have, using the monotonicity of the logarithm and 
(\ref{eq:cosh_estim})
\begin{equation*}
(1 + \epsilon)\frac{u^2}{2} - 
\log (\cosh(u(1 +\epsilon)))\geq 
(1 + \epsilon)\frac{u^2}{2} - \frac{(1+\epsilon)(1-\epsilon^2)u^2}{2} \geq 
\frac{\epsilon^2 u^2}{2}.
\end{equation*}
\end{proof}

We continue with the proof of Theorem \ref{th:zero_part}, considering
the regime $\epsilon>0$, and $R$ as in the statement of Claim \ref{claim:w1}.
%
In view of Proposition \ref{prop:int-rep-z}, we write
\begin{equation}\label{eq:split_of_integral}
\int_{-\infty}^\infty e^{-Nf_\beta(u)}\di u = \left[\int_{-\infty}^{-\delta} + \int_{\delta}^\infty + \int_{-\delta}^{\delta}\right] e^{-Nf_\beta(u)}\di u = I_1' + I_1+ I_2 = 2I_1 + I_2,
\end{equation}
where the last equality follows from the symmetry. Note that 
$f_\beta(0)=f'_\beta(0) = 0$. Hence, $u = 0$ is a saddle point. We will show below that the main contribution to the integral comes from a neighborhood of this saddle 
point. We will choose 
$\delta = N^{-2/5}$ so that 
$N\delta^3 \to 0$ and $\delta\sqrt{N}\to \infty$ as $N\to\infty$. 

We begin by estimating $I_1$.
\[
I_{1} = \int_{\delta}^\infty e^{-Nf_\beta(u)}\di u = 
\left[\int_{\delta}^{\sqrt{8\epsilon}} + \int_{\sqrt{8\epsilon}}^\infty \right]  e^{-Nf_\beta(u)}\di u = W_1 + W_2.
\]
Using Claim \ref{claim:w1}, we have
\begin{equation}\label{eq:w1}
  |W_1|\leq \int_{\delta}^{\sqrt{8\epsilon}} e^{-N\Re f_\beta(u)}\di u 
  \leq \int_{\delta}^{\sqrt{8\epsilon}} e^{-N\frac{\epsilon^2
  u^2}{2}}\di u \leq e^{-N\frac{
  \epsilon^2\delta^2}{2}}(\sqrt{8\epsilon} - \delta)\leq e^{-cN^{1/5}},
\end{equation}
for some constant $c>0$.\\
\medskip
To estimate $W_2$, we use \eqref{eq-prelim2a} of
Claim \ref{lemma:cosh_estimate}  and obtain
\begin{equation}\label{eq:w2}
  |W_2| \leq\int_{\sqrt{8\epsilon}}^\infty
  e^{-N\Re f_\beta(u)} \di u\leq \int_{\sqrt{8\epsilon}}^\infty
  e^{-N\epsilon\frac{u^2}{2}}\mathrm{du}\leq e^{-CN},
\end{equation}
where $C=C(\epsilon)>0$ is some constant. Combining (\ref{eq:w1}) and 
(\ref{eq:w2}) we get
\begin{equation}\label{eq:j1} |I_{1}|\leq e^{-CN^{1/5}} + e^{-CN} \leq e^{-\tilde{C}N^{1/5}}.
\end{equation}
We turn to estimating
$I_2$. Denote by 
\[
P_2(u) = (\beta - \beta^2)\frac{u^2}{2} 
\]
the Taylor approximation of $f_\beta(u)$ to second order. Note that, by the assumptions on $R$ 
\[
\Re(\beta - \beta^2) = -\epsilon -\epsilon^2 + R^2 \geq -2\epsilon + R^2 > 0. 
\]
For $|u|< \delta$ for our choice of $\delta$\, we have $O(u^3) = O(N^{-6/5})$. We get
\begin{equation}\label{eq:diff-f-fT}\begin{split}
\int_{-\delta}^\delta e^{-Nf_\beta(u)}\di u &= \int_{-\delta}^\delta e^{-NP_2(u)}e^{-N(f_\beta(u) - P_2(u))}\di u \\ &= \int_{-\delta}^\delta e^{-NP_2(u)}\di u + \int_{-\delta}^\delta e^{-NP_2(u)}\left(e^{-N(f_\beta(u) - P_2(u))} -1\right)\di u.
\end{split}\end{equation}
Since for any $|x| < 1/2$: $|e^x - 1| < 2x$, we obtain
\begin{equation}\label{eq:e-1}\left|e^{-N(f_\beta(u) - P_2(u))} -1\right| \leq C_1N|f_\beta(u) - P_2(u)|\leq C_2 N^{-1/5},
\end{equation}
where the constants depend only on $\beta$.
Combining (\ref{eq:diff-f-fT}) and (\ref{eq:e-1}) we obtain
\begin{equation}\label{eq:diff-f-fT-intdelta}
\left|\int_{-\delta}^\delta e^{-Nf_\beta(u)}\di u - \int_{-\delta}^\delta e^{-NP_2(u)}\di u\right|\leq C_2N^{-1/5}\int_{-\delta}^\delta e^{-N\Re P_2(u)}\di u \leq C_3N^{-1/5}\frac{1}{\sqrt{N}},
\end{equation}
Now we have the following inequality
\begin{equation}\label{eq:diff-fT} \begin{split}
\left| \int_{-\infty}^\infty e^{-NP_2(u)}\di u - \int_{-\delta}^\delta e^{-NP_2(u)}\di u \right|& \leq 2\int_\delta^\infty e^{-N\Re P_2(u)}\di u \leq 2\int_\delta^\infty e^{-N\Re (\beta - \beta^2)\frac{u\delta}{2}} \di u \\ &=  \frac{1}{C_4N^{3/5}}e^{-C_5N^{1/5}}.
\end{split}\end{equation}
Since
\begin{equation*}
\int_{-\infty}^\infty e^{-N P_2(u)}\di u = \sqrt{\frac{2\pi}{N(\beta - \beta^2)}},
\end{equation*}
we obtain combining (\ref{eq:diff-f-fT-intdelta}) and (\ref{eq:diff-fT})
that
\begin{equation*}
\left| I_2 - \sqrt{\frac{2\pi}{N(\beta - \beta^2)}}\right|\leq \frac{C_6}{N^{7/10}}.
\end{equation*}
Using the estimate (\ref{eq:j1}) on $|I_1|$ we obtain
\begin{equation*}
\left| \int_{-\infty}^\infty e^{-Nf_\beta(u)}\mathrm{du} - \sqrt{\frac{2\pi}{N(\beta - \beta^2)}} \right| \leq \frac{C_7}{N^{7/10}}.
\end{equation*}
This concludes the proof of the theorem.
\qed

\section{Proof of Theorem \ref{th:positive_part}}

\begin{proof} Observe that 
\[
f_{1 + \epsilon}(u) = (1 + \epsilon)\frac{u^2}{2} - \log(\cosh((1 + \epsilon)u)),\, \quad f'_{1 + \epsilon} (u)= (1 + \epsilon)\left(u - \tanh((1 + \epsilon)u))\right).
\] 
By the assumption (\ref{eq:curve}) we get
\[
f'_{1 + \epsilon}\left(\pm\frac{\pi}{R}\right) = (1 + \epsilon)\left( \pm\frac{\pi}{R} - \tanh \left( \pm\frac{\pi}{R}\frac{R}{2\pi}\log\left(\frac{1 + \frac{\pi}{R}}{1 - \frac{\pi}{R}}\right)\right) \right) = 0, \, \, \text{and}\, \,  f'_{1 + \epsilon} (0) = 0.
\]
These are the only real zeros of 
$f'_{1 + \epsilon}$ and 
$f_{1 + \epsilon}\left(\pm\frac{\pi}{R}\right) < 0$, in particular the 
minimum of $f_{1+\epsilon}(u)$ over $\R$ is achieved 
at $u=\pm \pi/R$.
%
We claim that the same is true of 
\[
\Re f_{\beta}(u) = (1 + \epsilon)\frac{u^2}{2} - \frac{1}{2}\log (\cosh^2((1 + \epsilon)u) - \sin^2(Ru)).
\]
Indeed, 
by the monotonicity of the logarithm, 
for any $u\in\R$ we get $\Re f_\beta (u) \geq f_{1 + \epsilon} (u)$.
On the other hand,
$\Re f_{\beta} (\pm\frac{\pi}{R}) =f_{1 + \epsilon} (\pm\frac{\pi}{R}) < 0$.
This yields that the minimum of $\Re f_\beta(u)$ over $\R$ is achieved 
at $\pm \pi/R$, as claimed.
We note in passing that $f_\beta'(\pm \pi/R)=0$, i.e. the points $\pm \pi/R$,
which minimize $\Re f_\beta (u)$ over $u\in \R$, are in fact saddle points.

Now we estimate the integral as before. 
Let $\delta=N^{-2/5}$ as in the proof of Theorem \ref{th:zero_part}. Similarly to the proof of Theorem \ref{th:zero_part} we divide the integral into pieces and use the symmetry to obtain
\begin{equation*}
\int_{-\infty}^\infty e^{-Nf_\beta(u)}\di u = \left[ 2\int_0^{\frac{\pi}{R} - \delta} + 2\int_{\frac{\pi}{R} - \delta}^{\frac{\pi}{R} + \delta} + 2\int_{\frac{\pi}{R} + \delta}^\infty\right] e^{-Nf_\beta(u)}\di u = 2I_1 + 2I_2 + 2I_3.
\end{equation*}
Also let 
\[
\int_{-\infty}^\infty e^{-Nf_{1 + \epsilon}(u)}\di u = 
\left[ 2\int_0^{\frac{\pi}{R} - \delta} +
  2\int_{\frac{\pi}{R} - \delta}^{\frac{\pi}{R} + \delta} + 2\int_{\frac{\pi}{R} + \delta}^\infty\right] e^{-Nf_{1 + \epsilon}(u)}\di u = 2\widehat{I}_1 + 2\widehat{I}_2 + 2\widehat{I}_3.
\]
Then, for any $j = 1, 2, 3$ we get $|I_j| \leq \widehat{I}_j$. 

We will show that
\begin{eqnarray}
\widehat{I}_1, \widehat{I}_3&\leq& 
Ce^{-CN^{1/5}}e^{-Nf_{1 + \epsilon}\left(\frac{\pi}{R}\right)},
\label{eq:201116a}\\
  |I_2| \sqrt{|f''_{\beta}(\pi/R)|} (1+o(1))&=&
  \widehat{I}_2\sqrt{f''_{1+\epsilon}(\pi/R)}(1+o(1))
  = \sqrt{\frac{2\pi}{N}}e^{-Nf_{1 + \epsilon}\left(\frac{\pi}{R}\right)},
  \label{eq:201116b}
\end{eqnarray}
and this, together with the fact that
$\Re f_\beta(\pi/R)=f_{1+\epsilon}(\pi/R)<0$,
will prove the theorem and also (\ref{eq:zn-real}).

We start from the estimate of $\widehat{I}_3$. We write
\[
  \widehat{I}_3 = \int_{\frac{\pi}{R} + \delta}^\infty  e^{-Nf_{1 + \epsilon}(u)}\di u = \left[ \int_{\frac{\pi}{R} + \delta}^{3} + 
  \int_{3}^\infty\right] 
  e^{-Nf_{1 + \epsilon}(u)}\di u =: W_1 + W_2.
\]
To estimate $W_2$,
note that since $\cosh(x)\leq e^x$ for $x$ real, we have that for
$u\geq 3$,
  \[f_{1+\epsilon}(u)= (1+\epsilon)u^2/2-\log \cosh((1+\epsilon)u)
  \geq (1+\epsilon)u(u/2-1)\geq (1+\epsilon) u/2. \]
  Thus,  
\begin{equation}\label{eq:w2-th2}
W_2 \leq e^{-3N/2}.
\end{equation}
To estimate $W_1$, note that
for any $3 \geq u > {\pi}/{R} + \delta$ 
the function $u\mapsto f_{1 + \epsilon} (u)$ is increasing 
and therefore
 $f_{1 + \epsilon}\left({\pi}/{R} + \delta\right) \geq f_{1 + \epsilon}\left( 
 {\pi}/{R}\right) + c\delta^2$.
Thus,
\begin{equation}\label{eq:w1-th2}
W_1\leq Ce^{-N\left(f_{1 + \epsilon}\left( \frac{\pi}{R}\right) + c\delta^2\right)} = Ce^{-Nf_{1 + \epsilon}\left(\frac{\pi}{R}\right)}e^{-CN^{1/5}}.
\end{equation}
Combining \eqref{eq:w2-th2} and \eqref{eq:w1-th2} yields \eqref{eq:201116a} for $\widehat{I}_3$. On the other hand, 
since $f_{1 + \epsilon} (u)$ is decreasing for any $0\leq u\leq {\pi}/{R} - \delta$ with the minimum at $\pi/R - \delta$, in the same way we obtain
\begin{equation}\label{eq:i1-th2}
\widehat{I}_1 \leq Ce^{-Nf_{1 + \epsilon}\left(\frac{\pi}{R}\right)}e^{-CN^{1/5}},
\end{equation}
which proves \eqref{eq:201116a} 
for $\widehat{I}_1$.

We turn to the proof of \eqref{eq:201116b}, which follows
a saddle point analysis similar to that done in the proof of Theorem
\ref{th:zero_part}. Recall that $\pi/R$ is a saddle point of $f_\beta$ and let
$\widetilde{P}_{2,\beta}$ denote its second order 
Taylor approximation there, i.e.
\[\widetilde{P}_{2,\beta}(u) := f_{\beta}\left(\frac{\pi}{R}\right) + \left(\beta - \frac{\beta^2}{\cosh^2((1+\epsilon)\frac{\pi}{R})}\right)\frac{(u - \frac{\pi}{R})^2}{2}.\]
As in 
\eqref{eq:diff-f-fT} and \eqref{eq:e-1},
replacing the domain of integration to $[\pi/R-\delta,\pi/R+\delta]$ and
$P_2$ by $\widetilde{P}_{2,\beta}$, we obtain the following analog 
of 
\eqref{eq:diff-f-fT-intdelta}:
\begin{equation}\label{eq:f-f2-on-small-int}
\left|\int_{\frac{\pi}{R} - \delta}^{\frac{\pi}{R} + \delta}e^{-Nf_\beta(u)}\di u - \int_{\frac{\pi}{R} - \delta}^{\frac{\pi}{R} + \delta}e^{-N
  \widetilde{P}_{2,\beta}(u)}\di u
\right| \leq \exp\left(-N\Re f_\beta\left(\frac{\pi}{R}\right)\right) C_1 N^{-1/5}\frac{1}{\sqrt{N}}.
\end{equation}
Similarly to 
\eqref{eq:diff-fT}, we also have
\begin{equation}\label{eq:est-f2}
  \left|\int_{-\infty}^{\infty}e^{-N\widetilde{P}_{2,\beta}(u)}\di u - \int_{\frac{\pi}{R} - \delta}^{\frac{\pi}{R} + \delta}
  e^{-N\widetilde{P}_{2,\beta}(u)}\di u\right| \leq \exp\left(-N\Re f_\beta\left(\frac{\pi}{R}\right)\right) \frac{1}{C_2N^{3/5}}e^{-C_3N^{1/5}}.
\end{equation}
Finally, by Gaussian integration we have
\[
\int_{-\infty}^\infty e^{-N
  \widetilde{P}_{2,\beta}} \di u=
  \exp\left(-Nf_\beta\left(\frac{\pi}{R}\right)\right)
  \sqrt{\frac{2\pi}{Nf_\beta''(\frac{\pi}{R})}}.
  \]
  Combining the last display with 
  \eqref{eq:f-f2-on-small-int} and \eqref{eq:est-f2} gives 
  \eqref{eq:201116b} for $I_2$. The analysis of $\widehat{I}_2$ is identical,
  taking $\beta=1+\epsilon$ in $\widetilde{P}_{2,\beta}$. 
\end{proof}

\section{Proof of Theorem \ref{th:pacman}}\label{sec:pacman}

\subsection{Construction of the 
saddle points for $h_\beta(u)$}\label{subsec:saddles}
We begin with the analysis of the critical points of $h_\beta$.
Let $K$ be a large constant (the choice of $K = 241$ will work). Define the
discs in the complex plane:
\begin{enumerate}
\item $D_0 (\beta) = \{ |u|\leq K|\beta - 1|^{3/2}\}$,
\item $D_+(\beta) = \{ |u - \sqrt{3\left( 1 - \frac{1}{\beta}\right)}|\leq K|\beta - 1|^{3/2}\}$,
\item $D_-(\beta) = \{ |u + \sqrt{3\left( 1 - \frac{1}{\beta}\right)}|\leq K|\beta - 1|^{3/2}\}$,
\end{enumerate}
where the branch of the square-root is chosen so that 
$\Im  \sqrt{3\left(1-\frac1\beta\right)}$ is in the upper 
half plane if $\beta$ 
is in the upper half plane.
For sufficiently small $c$ and for $0 < |\beta - 1| < c$,
these circles are disjoint. 

\begin{claim}\label{cl:zeros-h} For any $\beta$ such that $0 < |\beta - 1| < c$, the function $h'_\beta$ has exactly three zeros in $|u| \leq c_1 = 10\sqrt c$, one in each of the discs: $0\in D_0,\,  u_\beta\in D_+,\, -u_\beta\in D_-$. 
\end{claim}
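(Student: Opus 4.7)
The plan is to recast the equation $h'_\beta(u) = u/\beta - \tanh u = 0$ as
\[ p(u) := u - \beta \tanh u = 0, \]
and to apply Rouch\'e's theorem using the cubic Taylor approximation
\[ P(u) = (1-\beta)u + \frac{\beta}{3} u^3 = \frac{\beta}{3}\, u\,(u^2 - w^2), \qquad w := \sqrt{3(1-1/\beta)}. \]
The three zeros of $P$ are exactly $0$ and $\pm w$, placed by construction at the centers of $D_0, D_+, D_-$. From the Taylor series of $\tanh$, the remainder satisfies $p(u) - P(u) = -2\beta u^5/15 + O(\beta u^7)$, so $|p(u) - P(u)| \leq \tfrac{2|\beta|}{15}(1+o(1))|u|^5$ uniformly for $|u|$ small.

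First I would run Rouch\'e on the boundary of each small disc to place one zero of $p$ in each. On $\partial D_0$ one has $|u| = K|\beta - 1|^{3/2}$; the linear term of $P$ dominates, giving $|P(u)| \geq K|\beta-1|^{5/2}(1-o(1))$, while the remainder is of order $K^5 |\beta-1|^{15/2}$, and the comparison is very loose. On $\partial D_\pm$ the comparison is tight. Parametrizing $u = \pm w + v$ with $|v| = K|\beta-1|^{3/2}$ and using $|w|^2 = 3|\beta-1|/|\beta|$, a direct computation yields
\[ |P(u)| \geq 2K\,|\beta - 1|^{5/2}(1 + o(1)), \qquad |p(u) - P(u)| \leq \frac{6\sqrt 3}{5}\,\frac{|\beta-1|^{5/2}}{|\beta|^{3/2}}(1+o(1)). \]
For $|\beta|$ near $1$ the ratio is approximately $1.04/K$, so the prescribed $K = 241$ leaves ample margin, and Rouch\'e produces exactly one zero of $p$ in each of $D_0, D_+, D_-$.

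Second, to rule out further zeros in $|u| \leq c_1 = 10\sqrt c$, I would apply Rouch\'e once more on the circle $|u| = c_1$. Since $|w|^2 \leq 6c \ll 100c = c_1^2$, one has $|u^2 - w^2| \geq 94c$ on that circle, so $|P(u)| \geq (|\beta|/3)(10\sqrt c)(94 c)$, whereas the remainder is bounded by roughly $10^5 \cdot (2/15)\cdot c^{5/2}$. The ratio is of order $c$, hence less than $1$ once $c$ is small enough. Since the three zeros of $P$ all lie inside $|u| \leq c_1$, Rouch\'e then gives exactly three zeros of $p$ inside $|u| \leq c_1$. Combined with the previous step, these are precisely the three zeros sitting in $D_0, D_+, D_-$.

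The main delicate point is the Rouch\'e estimate on $\partial D_\pm$, where $|P|$ and the Taylor remainder have the \emph{same} order $|\beta-1|^{5/2}$; only the numerical prefactors distinguish them, which is what forces the explicit large choice of $K$. All other estimates have large margin once $c$ is taken small enough that the three discs $D_0, D_\pm$ are disjoint (which requires $K|\beta - 1| \ll 1$) and that $|u| \leq c_1$ is well inside the radius of convergence of the $\tanh$ series.
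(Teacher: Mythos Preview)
Your proposal is correct and follows essentially the same two-stage Rouch\'e argument as the paper: compare $h'_\beta$ (equivalently your $p=\beta h'_\beta$) with its cubic Taylor polynomial, first on the boundaries $\partial D_0,\partial D_\pm$ to place one zero in each disc, then on $|u|=c_1$ to rule out further zeros. The only cosmetic differences are that the paper works with $h'_\beta$ rather than $p$, and bounds the remainder via the crude sixth-derivative estimate $|h'_\beta-P'_4|\leq 5|u|^5$ instead of your sharper leading coefficient $2|\beta|/15$; this is why the paper's computation on $\partial D_\pm$ just barely needs $K>240$, whereas in your version $K=241$ is enormously generous.
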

\begin{proof} Introduce the Taylor approximation of $h_\beta(\cdot)$ up to
  fourth order,
\[
P_4(u) = \left(\frac{1}{\beta} - 1\right)\frac{u^2}{2} + \frac{u^4}{12}.
\]
Then,
\[
P'_4(u) = \left(\frac{1}{\beta} - 1\right)u + \frac{u^3}{3},
\]
and $P'_4(u)$ has exactly three zeros $u = 0,\, \, u_\pm = \pm \sqrt{3\left( 1 - \frac{1}{\beta}\right)}$. 
We will show
that on the boundary of each disc, 
namely on $\partial D_0 \cup\partial D_+ \cup\partial D_-$,
\begin{equation}
  \label{eq-est1}
|P'_4(u) - h'_\beta(u)| < |P'_4(u)|,
\end{equation}
which will show by Rouch\'e's theorem that $h'_\beta(\cdot)$
has a unique zero in each disc.

We check \eqref{eq-est1} 
on $\partial D_+$, the other two case are similar.
Since $h_\beta(u)$ is even,
 all odd coefficients in its Taylor approximation vanish. 
 Next, for any $u\in\C$ with $|u|\leq\frac{\pi}{4}$ 
 we get $|\tanh u|\leq 1$ and  therefore, repeatedly using that 
 $\tanh'(u)=1-\tanh^2(u)$,
\[
|h^{(6)}_\beta(u)| = 
|-16 + 136 \tanh^2 u - 240\tanh^4 u + 120\tanh^6 u| \leq 512,
\]
thence 
\begin{equation}\label{eq:bd-der}
|h'_\beta(u) - P'_4(u)|\leq \frac{512}{5!}|u|^5 \leq 5|u|^5.
\end{equation}
On the boundary $\partial D_+$ we have by a direct computation
\[\begin{split}
|P'_4(u)| &= \frac{1}{3}\left|u + \sqrt{3\left( 1 - \frac{1}{\beta}\right)}\right|\left|u - \sqrt{3\left( 1 - \frac{1}{\beta}\right)}\right||u|\\&\geq \frac{1}{3}K|\beta - 1|^{3/2}\left(\sqrt{12\left|1 - \frac{1}{\beta} \right|}\ - K|\beta - 1|^{3/2}\right)\left(\sqrt{3\left|1 - \frac{1}{\beta} \right|} - K|\beta - 1|^{3/2}\right).
\end{split}
\]
Choosing $c>0$ small enough, we obtain that
if $K|\beta - 1| \leq c$ then
\[
|P'_4(u)| \geq \frac{1}{3}K|\beta - 1|^{3/2} 3 |\beta - 1|^{1/2}|\beta - 1|^{1/2} = K |\beta - 1|^{5/2}.
\]
On the other hand, combining the estimate (\ref{eq:bd-der}) and that we are on the boundary of $D_+$ we obtain
\[
|h'_\beta(u) - P'_4(u)|\leq 5|u|^5 \leq 5\left[ \sqrt{3\left|1 - \frac{1}{\beta} \right|} + K|\beta - 1|^{3/2}\right]^5 \leq 5\cdot 2^5\cdot \frac{3}{2}|\beta - 1|^{5/2} < K|\beta - 1|^{5/2},
\]
since we assumed $K > 240$. Therefore, Rouch\'e's theorem applies and $D_+$ contains exactly one zero of $h'_\beta$.

To see that $h'_\beta$ has no more zeros in $|u| \leq c_1$, note that for such $u$
\[
|h'_\beta(u) - P'_4(u)|\leq 5|u|^5 \leq 5c_1^5,\,\, |P'_4(u)| \geq Cc_1^3,
\]
and we have the needed estimate by adjusting the constant $c_1$ such that $C > 5c_1^2$. Therefore, by an application of Rouch\'e's theorem we obtain the claim.
\end{proof}

\subsection{Proof of Theorem \ref{th:pacman}} Since $h_\beta$ is even, we write $h_\beta(u) = \hb(u^2)$. Note that $\widetilde{h}_1(0) = \widetilde{h}'_1(0) = 0$, while $\widetilde{h}''_1(0) = 1/6$. Also denote by $D(x, \epsilon)$ the disc in the complex plane of radius $\epsilon$ centered at $x$.

We use a change of variables provided by 
a theorem of Levinson,
which reduces $\tilde{h}_\beta$ to a polynomial of degree 2.  Indeed,
by Levinson's theorem \cite{Lev} (see also \cite[Theorem 1]{Martin}, after 
correcting for typos), 
there exist $\rho, c' > 0$ and analytic functions 
\begin{itemize}
\item $V: D(0, \rho)\times D(1, c') \to \C$,\,\,\, $V(0,\beta) = 0$,
\item $U: \{ (v, \beta)\, |\, \beta\in D(1, c'), v\in V(D(0, \rho), \beta)\} \to \C$,
\end{itemize}
such that, for $\beta\in D(1,c')$,
\begin{align}
\label{u-inverse} &U(V(z, \beta), \beta) = z\,\, \text{on}\,\, 
D(0, \rho),\,\, \text{that is},\,\, V\,\, \text{is the inverse of}\,\,  U, \\
\label{v-one-to-one} &V\,\, \text{is one to one on}\,\, D(0, \rho),\,\, \text{and}\,\,  0 < \frac{1}{C}\leq|V'(z,\beta)| \leq C < \infty,\\
\label{h-rep}&\hb(V(z, \beta)) = \frac{z^2}{2} - \xib z,\,\,\,  \text{with}\,\,\, \xib\,\,\, \text{analytic on}\,\,\, D(1, c),\,\,\, \xi(1) = 0,
\end{align}
where, for any function $f=f(z,\beta)$, we
write $f'(z,\beta)=\frac{\partial}{\partial z} f(z,\beta)$.
From (\ref{h-rep}) we obtain 
\begin{equation}\label{eq:partial}
 (\hb(V(z, \beta)))' = z - \xib,
\end{equation}
and therefore, since $|V'(z,\beta)| \neq 0$, one deduces that $\widetilde{h}'_\beta(V(\xib, \beta)) = 0$. In particular, $V(\xib, \beta)$ is a critical point of $\hb$. Since for $0 < |\beta - 1| < c$ the point $\ub^2$ is the 
unique critical point of $\hb$ in a neighborhood of zero, we obtain that
\begin{align}
\label{en:Vxib} &V(\xib, \beta) = \ub^2,\\
\label{en:xib} &\xib = \sqrt{-2 h_\beta(\ub)}.
\end{align}
Using (\ref{eq:partial}) once again and L'H\^opital's Rule, we obtain
\begin{equation}\label{en:V'xib}
V'(\xib, \beta) = \sqrt{\frac{1}{\hb''(V(\xib, \beta))}} = \frac{2\beta\ub}{\sqrt{\beta - \beta^2 + \ub^2}},
\end{equation} 
where the last equality follows since, by a 
direct computation and using (\ref{en:Vxib}), we obtain 
\[
\hb''(V(\xib, \beta)) = \hb''(\ub^2) = \frac{1}{4\ub^2}\left[\frac{1}{\beta} - 1 + \tanh^2\ub \right] = \frac{1}{4\ub^2}\left[\frac{1}{\beta} - 1 + \frac{\ub^2}{\beta^2} \right],
\]
and the last equality follows since $u_\beta$ is the critical point of $h_\beta(u)$ obeying $\tanh\ub = \frac{\ub}{\beta}$. Repeating this computation at $u=0$
we obtain
\begin{equation}\label{en:V'0}
V'(0, \beta) = \frac{2\beta\xib}{\beta - 1}.
\end{equation}

%

\noindent We need to estimate the following integral
\begin{equation}\label{eq:int-division}
\int_{-\infty}^\infty e^{-Nh_\beta(u)}\di u = 2 \int_0^\infty e^{-Nh_\beta(u)}\di u.  
\end{equation}
Let $\nu = |\beta - 1|^{0.1}$ and consider the following change of contour. 
\[
T_1 = [0, \sqrt{V(\nu, \beta)}],\,\, T_2 = [\sqrt{V(\nu, \beta)}, 6^{1/4}\sqrt\nu],\,\, T_3 = [6^{1/4}\sqrt\nu, \infty],
\]
where $V(\nu, \beta) \in\C$ and the square-root taken so that
$\Im(V(\nu,\beta))>0$ if $\Im \beta>0$. (Because $c$ is small, the region contained between
$T_1\cup T_2\cup T_3$ and $\R_+$ does not contain any pole of $h_\beta$.)
Now we rewrite the integral (\ref{eq:int-division}) as follows
\begin{equation}\label{eq:cange-of-contour}
2 \int_0^\infty e^{-Nh_\beta(u)}\di u = 2\left[ \int_{T_1} + \int_{T_2} +  \int_{T_3}\right]  e^{-Nh_\beta(u)}\di u = I + E' + E.
\end{equation}
The reason for this change of contour is that in order to estimate the term $I$, we would like to perform a change of variables given by Levinson's theorem, and we would like for the obtained contour (as a result of this change) to be an interval $[0, \nu]\subset\R$.

\noindent First, we estimate the error term $E'$. We perform the  
change of variables $u = \sqrt v$ and obtain
\[
E' = 2 \int_{T_2}  e^{-Nh_\beta(u)}\di u = \int_{\widetilde{T}_2}  e^{-N\hb(v)}\frac{\di v}{\sqrt v},
\]
where $\tilde T_2$ is the push forward of 
$T_2$ by the change of variables, and has endpoints
$V(\nu, \beta), \sqrt 6 \nu$.
Now we perform another change of variables $v = V(z, \beta)$ with $V(z, \beta)$ given by Levinson's theorem, and obtain, after another contour modification,
\[
\int_{\widetilde{T}_2}  e^{-N\hb(v)}\frac{\di v}{\sqrt v} = \int_\nu^{U(\sqrt 6\nu, \beta)} e^{-N(\frac{z^2}{2} - \xib z)}\frac{V'(z, \beta)}{\sqrt{V(z,\beta)}}\di z.
\]
Since $U'(0, \beta) = \frac{1}{V'(0, \beta)}$, using the expression (\ref{en:V'0}) for $V'(0, \beta)$ we get
\[
\begin{split}
U(\sqrt 6 \nu, \beta) &= U'(0, \beta)\sqrt 6 \nu + O(\nu^2) = \frac{\beta - 1}{2\beta\xib}\sqrt 6 \nu + O(\nu^2) \\&= \frac{1}{2\xi'(1)} (1 + O(\beta - 1))\sqrt 6 \nu + O(\nu^2) = \frac{\sqrt 6 \nu}{2\xi'(1)}(1 + O(\nu)) = \frac{\sqrt 6 \nu}{2\sqrt{3/2}}(1 + O(\nu)) \\ & = \nu(1 + O(\nu)),
\end{split}
\]
where we used that $\xi'(1) = \sqrt{\frac{3}{2}}$, see the computation (\ref{eq:xi'1}) below. Therefore, for $z$ in the segment $[\nu, U(\sqrt 6 \nu, \beta)]$ we obtain
\[
\Re\left(\frac{z^2}{2} - \xib z\right) = \frac{\nu^2}{2} + O(\nu^2) \geq \frac{\nu^2}{4}.
\]
Since on this segment $|V'(z, \beta)| \neq 0$ and $c \leq |V(z, \beta)| \leq C$, we get
\begin{equation}\label{eq:E'-est}
|E'| \leq \left| \int_\nu^{U(\sqrt 6\nu, \beta)} e^{-N(\frac{z^2}{2} - \xib z)}\frac{V'(z, \beta)}{\sqrt{V(z,\beta)}}\di z \right| \leq Ce^{-CN\nu^2}.
\end{equation}

\noindent Next, we estimate the error term $E$. Note that for some small $c > 0$ 
\[
\Re\frac{1}{\beta} = \frac{1 + \epsilon}{(1 + \epsilon)^2 + R^2} = 1 - \frac{\epsilon + \epsilon^2 + R^2}{(1 + \epsilon)^2 + R^2} \geq 1 - c|\beta - 1|.
\]
Set $b = \Re(1/\beta)$.
Then, $h'_b(u) = b\, u - \tanh u$ and it vanishes on $(0, \infty)$ at a single point $u^*$ which is of order $|\beta - 1|^{1/2}$, while $h'_b(u)\to_{u\to\infty} \infty$. Hence, $h'_b(u) > 0 $ for any $u > u^*$, in particular, this holds for any $u\geq 6^{1/4}\sqrt\nu > u^*$. Note that $h_b( 6^{1/4}\sqrt\nu) \geq c\nu^2 > 0$ and this is the minimum of $h_b(u)$ on the interval $[ 6^{1/4}\sqrt\nu, C]$ for any $C >  6^{1/4}\sqrt\nu$. Since $\lim_{u\to\infty}\frac{h_b(u)}{u^2} = \tilde{c} > 1/2$, there exists $\widehat{C}$ such that $h_b(u) > u^2/2$ for any $u >\widehat{C}$. Therefore, we obtain
\begin{equation}\label{eq:E-estim}
|E| \leq \left[\int_{ 6^{1/4}\sqrt\nu}^{\widehat{C}} + \int_{\widehat{C}}^\infty\right]e^{-N h_b(u)} \di u \leq (\widehat{C} -  6^{1/4}\sqrt\nu)e^{-N h_b( 6^{1/4}\sqrt\nu)} + \frac{2}{N}e^{-N\widehat{C}}.
\end{equation}
Now we estimate the main term $I$. First, we perform  the
change of variables $u = \sqrt{v}$ and obtain
\begin{equation}\label{eq:i-changed}
I = 2 \int_{T_1}  e^{-Nh_\beta(u)}\di u = \int_{\widetilde{T}_1} e^{-N\hb(v)}\frac{\di v}{\sqrt v},
\end{equation}
where $\widetilde{T}_1$ is the push forward of $T_1$ by the change of variables.
Note that $v = 0$ is not a critical point for $\hb(v)$ for $\beta \neq 1$.
However, it is the boundary of the integration in \eqref{eq:i-changed}, 
therefore it may give a non-vanishing contribution to the value of the integral.

We perform one more change of variables $v = V(z, \beta)$ with $V(z, \beta)$ given by Levinson's theorem, and modify the contour of integration
to obtain
\[
\int_{\widetilde{T}_1} e^{-N\hb(v)}\frac{\di v}{\sqrt v} = \int_0^\nu e^{-N(\frac{z^2}{2} - \xib z)}\frac{V'(z, \beta)}{\sqrt{V(z,\beta)}}\di z. 
\]
Note that, around $z = 0$ we obtain
\begin{equation}\label{eq:V0-term}
\begin{split}
\frac{V'(z, \beta)}{\sqrt{V(z,\beta)}} = \frac{V'(0, \beta) + O(z)}{\sqrt{V(0,\beta) + V'(0, \beta)z + O(z^2)}} &= \sqrt{\frac{V'(0, \beta)}{z}}(1 + O(z)) \\&= \sqrt{\frac{2\beta\xib}{(\beta - 1)z}}(1 + O(z)),
\end{split} 
\end{equation}
where in the last equality we used that $V(0, \beta) = 0$, the condition (\ref{v-one-to-one}), and the  computation (\ref{en:V'0}) of the value $V'(0, \beta)$. In the same way we obtain around $z = \xib$
\begin{equation}\label{eq:Vxib-term}
\begin{split}
\frac{V'(z, \beta)}{\sqrt{V(z,\beta)}} &= \frac{V'(\xib, \beta) + O(z - \xib)}{\sqrt{V(\xib,\beta) + V'(\xib, \beta)(z - \xib) + O((z - \xib)^2)}} \\ &= \frac{V'(\xib, \beta)}{\sqrt{V(\xib,\beta)}} \left[1 + C_1(\beta)(z - \xib) +  O((z - \xib)^2) \right] \\ & = \frac{2\beta}{\sqrt{\beta - \beta^2 + \ub^2}} \left[1 + C_1(\beta)(z - \xib) +  O((z - \xib)^2) \right],
\end{split}
\end{equation}
where in the last equality we used the results (\ref{en:Vxib}) and (\ref{en:V'xib}) for the values $V(\xib, \beta)$ and $V'(\xib, \beta)$. 
We note that for all $\delta > 0$ the implicit constants and $C_1(\beta)$ in (\ref{eq:V0-term}) and (\ref{eq:Vxib-term}) are uniform in $\beta$ with $\delta < |\beta - 1|\leq c'$.


 Now we perform one more change of the contour of integration. We change the contour to $\Gamma_1\cup\Gamma_2\cup\Gamma_3$, where $\Gamma_i$ are the following intervals 
\begin{equation*}
\Gamma_1 = [0, -\overline\xib],\quad \Gamma_2 = [-\overline\xib, \nu + i\Im\xib], \quad \Gamma_3 = [\nu + i\Im\xib, \nu].
\end{equation*}
%
Denote for $j = 1, 2, 3$,
\[
I_j = \int_{\Gamma_j}  e^{-N(\frac{z^2}{2} - \xi(\beta)z)} \frac{V'(z, \beta)}{\sqrt{V(z, \beta)}}(1 + O(z)) \di z.  
\]
Recall that $\xib=-2h_\beta(\ub)$, see 
\eqref{en:xib}.
Our main estimate is the following.
%
\begin{lemma}\label{lem:est-on-gamma} Let $\delta < |\beta - 1| \leq c'$.
\begin{enumerate}
  \item \begin{align}
      \label{eq-ich2}
      & \left| I_1  -  \sqrt{\frac{2\pi\beta}{N(1 - \beta)}} \right|\leq \frac{C(\delta)}{N^{3/2}}.
    \end{align}
\item For any $\eta > 0$, 
\begin{align}\label{lem-eq:i2-first-case}
&\text{for $\Re\xib > \eta$}\quad
\left|I_2  - \frac{2\beta}{\sqrt {\beta - \beta^2 + \ub^2}} e^{N\frac{\xib^2}{2}}\sqrt{\frac{2\pi}{N}}\right|\leq \frac{C(\delta, \eta)}{N^{3/2}}e^{N\frac{\Re\xib^2}{2}},\\
\label{lem-eq:i2-second-case}
&\text{for $\Re\xib\geq 0$}\quad |I_2| \leq \frac{C(\delta)}{\sqrt N}e^{N\frac{\Re\xib^2}{2}},\\
\label{lem-eq:i2-third-case}
&\text{for $\Re\xib \leq 0$}\quad |I_2| \leq C(\delta)e^{-N\left(\frac{\Re\xib^2}{2} + |\xib|^2\right)},
\end{align}
\item $|I_3| \leq O(e^{-cN})$.
\end{enumerate}
\end{lemma}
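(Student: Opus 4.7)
The plan is to estimate the three integrals separately, exploiting the rewriting $z^2/2 - \xib z = (z-\xib)^2/2 - \xib^2/2$ (so the quadratic exponent has its saddle at $z=\xib$), together with the expansions of $V'(z,\beta)/\sqrt{V(z,\beta)}$ in \eqref{eq:V0-term} near $z=0$ and in \eqref{eq:Vxib-term} near $z=\xib$. The contour has been designed so that $\Gamma_1$ is a descent ray from the endpoint singularity at $z=0$, $\Gamma_2$ is a horizontal segment at height $\Im\xib$ passing through the saddle $\xib$, and $\Gamma_3$ is a short vertical segment far from both special points.

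For $I_1$ on $\Gamma_1=[0,-\overline{\xib}]$, I would first remove the $1/\sqrt z$ endpoint singularity via the change of variables $w=\sqrt z$, under which $dz/\sqrt z = 2\,dw$ and the integrand becomes $2\sqrt{V'(0,\beta)}\bigl(1+O(w^2)\bigr)\,e^{-N(w^4/2-\xib w^2)}\,dw$, now regular at $w=0$. Parametrizing $w=\sqrt{-\overline{\xib}}\,s$ on the image contour makes $\xib w^2 = -|\xib|^2 s^2$ real and negative, and the rescaling $r=\sqrt N\,|\xib|\,s$ reduces the leading integral to $\int_0^\infty e^{-r^2}\,dr=\sqrt\pi/2$; the quartic correction contributes $O(1/(N|\xib|^2))$ in the rescaled variable, while the tail past $s=1$ is exponentially small since $N|\xib|^2\geq cN\delta^2$. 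Substituting $V'(0,\beta)=2\beta\xib/(\beta-1)$ from \eqref{en:V'0} and simplifying $-\overline{\xib}/|\xib|^2=-1/\xib$ collapses the leading term to $\sqrt{2\pi\beta/(N(1-\beta))}$, the branch being dictated by $\sqrt 1=1$. The $O(N^{-3/2})$ error then absorbs the $O(w^2)$ prefactor correction, the quartic remainder in the exponent, and the exponentially small tail.

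For $I_2$ on $\Gamma_2=[-\overline{\xib},\nu+i\Im\xib]$: writing $z=t+i\Im\xib$ gives $(z-\xib)^2=(t-\Re\xib)^2$, so $|e^{-N(z^2/2-\xib z)}|=e^{N\Re\xib^2/2}\,e^{-N(t-\Re\xib)^2/2}$ and the integrand has a genuine Gaussian profile. For \eqref{lem-eq:i2-first-case} with $\Re\xib>\eta$, the saddle $t=\Re\xib$ lies strictly inside $[-\Re\xib,\nu]$; a standard Laplace expansion around it, using the prefactor value $V'(\xib,\beta)/\sqrt{V(\xib,\beta)}=2\beta/\sqrt{\beta-\beta^2+\ub^2}$ from \eqref{en:Vxib}--\eqref{en:V'xib} together with $\int e^{-N(t-\Re\xib)^2/2}\,dt=\sqrt{2\pi/N}$, yields the stated asymptotic with Taylor remainder $O(N^{-3/2})\,e^{N\Re\xib^2/2}$. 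Estimate \eqref{lem-eq:i2-second-case} follows from a crude Gaussian bound on the modulus of the integrand together with boundedness of $|V'/\sqrt V|$ on $\Gamma_2$. For \eqref{lem-eq:i2-third-case} with $\Re\xib\leq 0$, the saddle sits to the left of the contour's starting point $t=-\Re\xib\geq 0$, so $(t-\Re\xib)^2\geq 4(\Re\xib)^2$ throughout $\Gamma_2$; combined with the prefactor $e^{N\Re\xib^2/2}$ this produces $e^{-N(3(\Re\xib)^2+(\Im\xib)^2)/2}=e^{-N(\Re\xib^2/2+|\xib|^2)}$ times the bounded length of $\Gamma_2$ and the bounded prefactor.

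For $I_3$ on the short vertical segment $\Gamma_3=[\nu+i\Im\xib,\nu]$: parametrizing $z=\nu+is$ yields $\Re(z^2/2-\xib z)=(\nu^2-s^2)/2-\nu\Re\xib+s\Im\xib\geq c\nu^2$, since $|\xib|=O(|\beta-1|)\ll\nu=|\beta-1|^{0.1}$ and $|s|\leq|\Im\xib|$. With $|\beta-1|\geq\delta$ giving $\nu^2\geq\delta^{0.2}$, and both the length $|\Im\xib|$ of $\Gamma_3$ and the prefactor bounded uniformly away from $z=0$, one concludes $|I_3|\leq Ce^{-cN\nu^2}\leq Ce^{-cN}$ with constants depending on $\delta$. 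The main obstacle is $I_1$: the interplay between the endpoint singularity and the complex saddle direction, together with the multiple square-root branches (for $\sqrt{V'(0,\beta)}$, $\sqrt{-\overline{\xib}}$, and $\sqrt{1-\beta}$), must be tracked carefully so that the final constant lands exactly on $\sqrt{2\pi\beta/(N(1-\beta))}$ rather than on its negative or a purely imaginary rotation.
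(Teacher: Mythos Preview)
Your proposal is correct and follows essentially the same route as the paper. For $I_1$ the paper substitutes $z=-\overline{\xib}\,t$ and then $t=y^2$, which is the same composite change of variables as your $w=\sqrt z$ followed by $w=\sqrt{-\overline{\xib}}\,s$; both land on the integral $\int_0^1 e^{-N(\overline{\xib}^{\,2}y^4/2+|\xib|^2 y^2)}(1+O(y^2))\,dy$ and apply Laplace at $y=0$. For $I_2$ the paper also shifts $z=\xib+t$ with $t\in\R$ (your horizontal parametrization $z=t+i\Im\xib$ differs only by the translation $t\mapsto t-\Re\xib$), handles the three regimes identically, and for $\Re\xib\le 0$ obtains the same bound by evaluating $\Re(z^2/2-\xib z)$ at the left endpoint $z=-\overline{\xib}$, which is exactly your $(t-\Re\xib)^2\ge 4(\Re\xib)^2$ observation. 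Your treatment of $I_3$ via $\Re(z^2/2-\xib z)\ge c\nu^2$ is in fact slightly cleaner than the paper's, which bounds by $e^{-cN\nu\Im\xib}$, but the mechanism is the same. Your closing caveat about tracking the square-root branches in $I_1$ is well placed; that bookkeeping is the only genuinely delicate point, and the algebra $V'(0,\beta)(-\overline{\xib})/|\xib|^2=2\beta/(1-\beta)$ does collapse to the stated constant.
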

Given Lemma \ref{lem:est-on-gamma}, we now complete
the proof of
Theorem
\ref{th:pacman}.
\begin{proof}[Proof of Theorem \ref{th:pacman}] The result follows from the combination of estimate (\ref{eq:E'-est}) on $E'$, the estimate (\ref{eq:E-estim}) on $E$, the definition (\ref{eq:zn-hb}) of $\zn$,  
and Lemma \ref{lem:est-on-gamma}, 
when we apply Lemma \ref{lem:est-on-gamma} as 
  follows. We consider three cases
\begin{enumerate}
\item $\Re\xib \geq \delta^{10}$, 
\item $0\leq \Re\xib \leq \delta^{10}$, 
\item $\Re\xib \leq 0$. 
\end{enumerate}
In the first case, we use the asymptotics 
(\ref{lem-eq:i2-first-case}) for $I_2$. 
In the second case, the formula (\ref{en:xib}) 
linking $\xib$ and $h_\beta(\ub)$ and the estimate 
$|\xib| \geq C_1|\beta - 1| \geq C_1\delta$ which follows from the computation of $\xi'(1)$ in  (\ref{eq:xi'1}) below imply that
\[
\Re h_\beta(\ub) = -\frac{1}{2}\Re\xib^2 \geq -\frac{1}{2}(\delta^{20} - C_2\delta^2) \geq C_3\delta^2.
\]
Therefore, the second term in the statement \ref{pacman-th:eq2} of the Theorem is subdominant. In this case we use the estimate (\ref{lem-eq:i2-second-case}) for $I_2$. In the third case, we are even further to the left of the critical curve $\Gamma$, and we use the rough estimate (\ref{lem-eq:i2-third-case}) for $I_2$.
\smallskip

\noindent In all the three cases, we use the first statement of the Lemma for $I_1$ and the third statement for $I_3$. This finishes the proof.
\end{proof}
\begin{proof}[Proof of Lemma \ref{lem:est-on-gamma}] 
We start with the estimate of $I_1$. 
Assume $\Im\xib > 0$ (the case $\Im\xib < 0$ is done in the same way). 
Define change of variables $z = -\overline\xib t,\, t\in\R$. 
Then, for $z\in[0, -\overline\xib]$ we get $t\in[0, 1]$ and
\begin{equation}
  \label{eq-ich1}
  \begin{split}
I_1 & = \sqrt{\frac{2\beta\xib}{\beta - 1}} \int_{\Gamma_1} e^{-N(\frac{z^2}{2} - \xi(\beta)z)} \frac{1}{\sqrt z}  (1 + O(z)) \di z \\&= 
\sqrt{\frac{2\beta|\xib|^2}{\beta - 1}} \int_0^1 e^{-N(\frac{(\overline\xib)^2}{2}t^2 + t|\xib|^2)}\frac{1}{\sqrt{t}}(1 + O(t))\di t\\
&= 
\sqrt{\frac{2\beta|\xib|^2}{\beta - 1}} 
\int_{-1}^1 e^{-N(\frac{(\overline\xib)^2}{2}y^4 + y^2|\xib|^2)}
(1 + O(y^2))\di y,
\end{split}
\end{equation}
where we used the change of variables $t=y^2$. Note that
the unique minimum of $g(y)=
  \frac{\Re\overline\xib^2}{2}y^4 + y^2|\xib|^2$ is at $y = 0$. 
 Indeed, if $\Re\overline\xib^2 \geq 0$, then $g(y)$ is 
 a monotone increasing function on $\R_+$ with a unique minimum at $y = 0$. 
 If $\Re\overline\xib^2 < 0$, then for any $0\leq y \leq 1$,
\[
g'(y) = 2y^3 \Re\overline\xib^2 + 2y|\xib|^2 \geq 2y
(|\xib|^2 - y^2(-\Re\overline\xib^2)) \geq 0,
\]
and the last inequality follows from $|\Re\overline\xib^2| \leq |\xib|^2$. 
We can now apply the Laplace method (for example, in the form
of \cite[Theorem 3.5.3]{AGZ}, keeping track of the error term in the proof)
to the last integral in
\eqref{eq-ich1}, and conclude with \eqref{eq-ich2}. 

\noindent Now we treat $I_2$. First, we prove the first two cases (\ref{lem-eq:i2-first-case}) and (\ref{lem-eq:i2-second-case}), where $\Re\xib\geq 0$. Using the expansion (\ref{eq:Vxib-term}) of the non-exponential term in the integral we obtain
\begin{equation}\label{eq:i2-expansion}
I_2 = \frac{2\beta}{\sqrt{\beta - \beta^2 + \ub^2}}\int_{\Gamma_2} e^{-N(\frac{z^2}{2} - \xib z)}[1 + C_1(\beta)(z - \xib) + O((z - \xib)^2)]\di z.
\end{equation}
Define the following change of variables $z = \xi(\beta) + t$, $t\in\R$. Then, $\frac{z^2}{2} - \xi(\beta)z = -\frac{\xi^2}{2} + \frac{t^2}{2}$. Note that, $\Im(-\frac{\xi^2}{2} + \frac{t^2}{2}) = \Im (-\frac{\xi^2}{2}) = \text{const}$, thus this is a minimal phase contour, and for $\Re\xib\geq 0$ it passes throughout the critical point $\xib$. Therefore, the main contribution to the integral on this contour comes from the saddle point and the rest is small. With this change of variable, we obtain
\begin{equation}\label{eq:i2}
\begin{split}
I_2 &=  \frac{2\beta}{\sqrt{\beta - \beta^2 + \ub^2}}e^{N\frac{\xib^2}{2}}\int_{-2\Re\xib}^{\nu - \Re\xib}e^{-N\frac{t^2}{2}}[1 + C_1(\beta)t + O(t^2)]\di t  \\&= \frac{2\beta}{\sqrt{\beta - \beta^2 + \ub^2}}e^{N\frac{\xib^2}{2}} [J_1 + J_2 + J_3].
\end{split}
\end{equation}
We begin with the first case (\ref{lem-eq:i2-first-case}). In this case,
the result is an immediate (elementary) application of the Laplace method,
see again \cite[Theorem 3.5.3]{AGZ}.
The correction of order $O(N^{-1})$ in (\ref{lem-eq:i2-first-case}) 
comes from the estimate on $J_3$, therefore we have finished with this case. 
Note that the implicit constant is not uniform in $\eta\to +0$. 

 To prove the estimate (\ref{lem-eq:i2-second-case}) we do the following rough bound
\[
|I_2|\leq C(\delta)e^{N\frac{\Re\xib^2}{2}}\int_{-\infty}^\infty e^{-N\frac{t^2}{2}}\di t \leq \frac{\widetilde{C}(\delta)}{\sqrt N} e^{N\frac{\Re\xib^2}{2}}.
\]
Note that $C(\delta)$ and $\widetilde{C}(\delta)$ are uniform in $\Re\xib\geq 0$. 

 Now we prove the last case (\ref{lem-eq:i2-third-case}). If $\Re\xib \leq 0$, then $\Re(-\overline\xib) \geq 0$. At the point $z = -\overline\xib$ we obtain
\[
\left[\frac{z^2}{2} - \xib z\right]\big|_{z = -\overline\xib} = \frac{\overline\xib^2}{2} + |\xib|^2 > 0.
\]
Note that $\frac{z^2}{2} - \xib z$ is a monotone increasing function on the interval $\Gamma_2$ with a minimum attained at $z = -\overline\xib$. Thus, we obtain
\[
|I_2|\leq \left| \int_{-\overline\xib}^{\nu + i\Im\xib} e^{-N(\frac{z^2}{2} - \xib z)}\max_{z\in\Gamma_2}\frac{|V'(z, \beta)|}{\sqrt{|V(z, \beta)|}}\di z\right| \leq C(\delta)e^{-N(\frac{\Re\xib^2}{2} + |\xib|^2)}.
\]

To estimate $I_3$, note that on this contour $z\in i\R$, therefore, we get for $\tilde{z} = \Im z$
\begin{equation}\label{eq:i3}
|I_3| \leq |\sqrt{V'(0, \beta)}|\int_{\nu}^{\nu + i\Im\xib}e^{-N(-\frac{\tilde{z}^2}{2} + \Im\xib\tilde{z})}\frac{\di \tilde{z}}{\sqrt{\tilde{z}}} (1 + O(\tilde{z})) \leq C e^{-cN\nu\Im\xib},
\end{equation}
where the last inequlity follows since the function $-\frac{\tilde{z}^2}{2} + \Im\xib\tilde{z}$ is  monotone decreasing for $\tilde{z} \geq \Im\xib$ with a minimum attained at $\nu + i\Im\xib$.
\end{proof}
\smallskip

\subsection{Construction of the critical curve}

\begin{proof}[Proof of Claim \ref{cl:crit-curve}]
  First, let us note the following
\begin{equation}\label{eq:hb-asym}
h_\beta(u) = \frac{u^2}{2\beta} - \log\cosh u = \frac{u^2}{2} \left(\frac{1}{\beta} - 1\right) - \left(\log\cosh u - \frac{u^2}{2} \right) = \frac{u^2}{2} \left(\frac{1}{\beta} - 1\right) + \frac{u^4}{12} + O(u^6),
\end{equation}
where the last equality holds since $\log\cosh u - \frac{u^2}{2} = - \frac{u^4}{12}  +O(u^6)$. By Claim \ref{cl:zeros-h} we get
\begin{equation}\label{eq:u-asym}
\ub^2 = 3(\beta - 1)(1 + O(\beta - 1)),
\end{equation}
therefore we obtain
\begin{equation}\label{eq:hbub-asym}
\begin{split}
h_\beta(\ub) &= \frac{\ub^2}{2\beta}(1 - \beta) + \frac{\ub^4}{12} + O(\ub^6) \\&= -\frac{3}{2\beta}(\beta - 1)^2(1 + O(\beta - 1)) + \frac{9}{12}(\beta - 1)^2(1 + O(\beta - 1)) + O((\beta - 1)^3) \\& = -\frac{3}{4}(\beta - 1)^2(1 + O(\beta - 1)).
\end{split}
\end{equation}
From the equation (\ref{en:xib}) linking $\xib$ and $h_\beta(\ub)$ we obtain that $\Re h_\beta(\ub) = 0$ if and only if $\xib\in e^{i\frac{\pi}{4}}\R\cup e^{-i\frac{\pi}{4}}\R$. Combining (\ref{en:xib}) and (\ref{eq:hbub-asym}) we conclude that 
\begin{equation}\label{eq:xi'1}
\xi'(1) = \sqrt{\frac{3}{2}},
\end{equation}
therefore, $\xib$ is one to one in $|\beta - 1| < c$ for $c > 0$ sufficiently small. Then, the curves $\gamma_\pm = \{\beta\,\,|\,\,  |\beta - 1| < c,\,\,  \xib\in e^{i\frac{\pi}{4}}\R\cup e^{-i\frac{\pi}{4}}\R\}$ are analytic. Note that, $\gamma_- = \overline\gamma_+$.

By (\ref{eq:hbub-asym}) we have $h_\beta(\ub) = -\frac{3}{4}(\beta - 1)^2 + O((\beta - 1)^3)$, therefore, for $\beta = 1 + \epsilon + iR \in\gamma_\pm$ we get
\[
0 = \Re h_\beta(\ub) = -\frac{3}{4} (\epsilon^2 - R^2) + O(\epsilon^3 + R^3),
\]
namely, $R^2 = \epsilon^2 + O(\epsilon^3)$ and we get
\[
R = \pm\epsilon (1 + O(\epsilon)).
\]
\end{proof}

\subsection{Proof of Corollary \ref{cor:zeros-of-zn}}
We consider the zeros of
\[
\psin = \frac{1}{\sqrt{1 - \beta}} + 2\sqrt{\frac{\beta}{\beta - \beta^2 + u_\beta^2}}e^{N(-h_\beta(u_\beta))}.
\] 
We will work with $\Re\beta \geq 1$ in the domain $\mathcal{D}_\delta = \{ \Re\beta \geq 1\} \cap \{\delta < |\beta - 1| < c' \}$.
First, we need the following estimate.
\begin{claim}\label{cl:est-re-hb} In the domain $\mathcal{D}_\delta$
  \begin{equation}
    \label{eq-ich3}
\frac{1}{C(\delta)}\,\, \mathrm{dist}(\beta, \Gamma) \leq |\Re h_\beta(\ub)| \leq C(\delta)\,\, \mathrm{dist}(\beta, \Gamma).
\end{equation}
\end{claim}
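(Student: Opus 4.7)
The plan is to transfer the problem to $\xi$-coordinates, where the critical locus becomes a pair of straight lines through the origin and the estimate reduces to elementary plane geometry. From the relation $\xi(\beta)^2=-2h_\beta(u_\beta)$ in \eqref{en:xib}, writing $\xi(\beta)=X+iY$ gives
\[
\Re h_\beta(u_\beta)=-\tfrac12\Re \xi(\beta)^2=-\tfrac12(X-Y)(X+Y),
\]
so by definition $\Gamma$ corresponds in $\xi$-coordinates to the union of the diagonals $L_\pm=e^{\pm i\pi/4}\R$ (intersected with the image of the right half-disc under $\xi$).

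First I would record the biholomorphism properties of $\xi$. Since $\xi$ is analytic near $\beta=1$ with $\xi'(1)=\sqrt{3/2}\ne 0$ by \eqref{eq:xi'1}, after shrinking $c'$ if necessary one has $c_1\le|\xi'(\beta)|\le c_2$ on $\{|\beta-1|<c'\}$ for absolute constants, so $\xi$ is bi-Lipschitz onto its image with constants depending only on $c'$. In particular $|\xi(\beta)|\asymp|\beta-1|$ and
\[
\mathrm{dist}(\beta,\Gamma)\asymp \mathrm{dist}(\xi(\beta),\xi(\Gamma)).
\]
On $\mathcal{D}_\delta$ this yields the two-sided bound $|\xi(\beta)|\in[c_1\delta,c_2c']$; this is where the $\delta$-dependence in the claim enters.

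The main geometric step is then the following elementary identity. For $w=X+iY$,
\[
\mathrm{dist}(w,L_+\cup L_-)=\tfrac{1}{\sqrt 2}\min(|X-Y|,|X+Y|),
\]
while $(X-Y)^2+(X+Y)^2=2|w|^2$ forces $\max(|X-Y|,|X+Y|)\in[|w|,\sqrt 2\,|w|]$. Multiplying gives
\[
|X^2-Y^2|=|X-Y|\cdot|X+Y|\asymp |w|\cdot \mathrm{dist}(w,L_+\cup L_-)
\]
with absolute constants. Applying this with $w=\xi(\beta)$, combining with the bi-Lipschitz bound above, and using that $|\xi(\beta)|$ is bounded above and below by constants depending only on $\delta$ and $c'$, one concludes
\[
|\Re h_\beta(u_\beta)|=\tfrac12 |X^2-Y^2|\asymp_\delta \mathrm{dist}(\beta,\Gamma),
\]
which is exactly \eqref{eq-ich3}.

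The only step that requires a small amount of care is verifying that $\mathrm{dist}(\xi(\beta),L_+\cup L_-)$ actually agrees with $\mathrm{dist}(\xi(\beta),\xi(\Gamma))$ up to a constant factor: a priori the nearest point on the full lines $L_+\cup L_-$ might lie outside the image of $\{|\beta-1|<c'\}\cap\{\Re\beta\ge 1\}$. I would dispatch this by noting that $\xi(\beta)$ itself lies in this image, that the image is a small smooth neighborhood of the origin on which both diagonals pass transversally through $0$, and that $\Gamma$ traces out precisely these diagonal segments; the nearest point of $\xi(\Gamma)$ to $\xi(\beta)$ therefore differs from the nearest point of $L_+\cup L_-$ by at most a constant factor. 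This is the only mildly delicate point; the rest is routine once the coordinate change has been performed.
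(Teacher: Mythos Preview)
Your proof is correct and takes a genuinely different route from the paper's. The paper argues the two inequalities separately: the upper bound comes directly from the Lipschitz property of $\beta\mapsto h_\beta(u_\beta)$ (which is essentially your bi-Lipschitz observation for $\xi$), while for the lower bound the paper computes explicitly
\[
\frac{d}{d\beta}h_\beta(u_\beta)=-\frac{u_\beta^2}{2\beta^2},
\]
bounds this away from zero on $\mathcal{D}_\delta$ using $|u_\beta|^2\asymp|\beta-1|\ge\delta$, and then integrates along a gradient curve of $\Re h_\beta(u_\beta)$ from $\beta$ back to $\Gamma$. Your approach instead transports the whole question to $\xi$-coordinates, where $\Gamma$ becomes a pair of straight lines through the origin and the estimate reduces to the factorization $|X^2-Y^2|=|X-Y|\cdot|X+Y|$ together with $\max(|X-Y|,|X+Y|)\asymp|\xi(\beta)|$; this dispatches both inequalities at once and is conceptually cleaner. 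The one point you flag as delicate---that the foot of the perpendicular from $\xi(\beta)$ to $L_+\cup L_-$ might land outside $\xi(\Gamma)$---is real but harmless: for $\beta\in\mathcal{D}_\delta$ with $\Im\beta\ge 0$ one has $\xi(\beta)$ in a small perturbation of the closed first quadrant, so its projection onto $L_+$ lies on the positive ray of $L_+$, which is exactly $\xi(\Gamma\cap\{\Im\beta\ge 0\})$ up to endpoints; this is enough since the projection onto $L_+$ is always at least as close as the projection onto $L_-$ in that situation. The paper's gradient-flow argument avoids this geometric check at the cost of the explicit derivative computation.
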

\begin{proof}[Proof of Claim \ref{cl:est-re-hb}] We note that $\beta\mapsto h_\beta(u_\beta)$ is Lipschitz with constant $C$ (independent of $\delta$) 
  on $\mathcal{D}_\delta$.
  This follows from the analyticity of $\xi(\beta)$ and 
  \eqref{en:xib}.

  We begin with the proof of the upper bound in \eqref{eq-ich3}.
  If $\beta'$ is the point on the critical curve $\Gamma$ closest to $\beta$, then, since $\Re h_{\beta'}(u_{\beta'}) = 0$, we get from the Lipschitz property,
\[
|\Re h_{\beta}(u_{\beta})| = |\Re h_{\beta}(u_{\beta}) - \Re h_{\beta'}(u_{\beta'})| \leq C|\beta - \beta'|
=C\mathrm{dist}(\beta, \Gamma).
\]

We turn to the proof of the lower bound in \eqref{eq-ich3}.
We have
\begin{equation}\label{eq:hb-der}
\frac{\di}{\di \beta}h_\beta(\ub) = -\frac{\ub^2}{2\beta^2} + \frac{\di \ub}{\di\beta}\left(\frac{\ub}{\beta} - \tanh\ub\right) = -\frac{\ub^2}{2\beta^2},
\end{equation}
where the last equality holds since $\ub$ is a saddle point of $h_\beta(u)$. 
By Claim \ref{cl:zeros-h} we get
on $\mathcal{D}_\delta$, 
\[
|\ub|^2 = |3(\beta - 1)(1 + O(\beta - 1))| \geq C(\delta).
\]
and therefore, on 
$\mathcal{D}_\delta$, 
\begin{equation}
  \label{eq-ich4}
  |\frac{d}{d\beta}\Re h_\beta(\ub)| \geq C'(\delta).
\end{equation}
%
Connect $\ub$ to some $\beta'\in \Gamma$ by a curve following the gradient
$\frac{d}{d\beta}\Re h_\beta(\ub)$. The length of this curve  
is bounded by a constant times the 
Euclidean distance between $\beta$ and $\beta'\in\Gamma$. Applying
\eqref{eq-ich4} then yields the lower bound, 
since $\Re h_{\beta'}(u_{\beta'})=0$.
\end{proof}

Now we observe the following
\begin{claim}\label{cl:psi-properties}\hfill
\begin{itemize}
\item The zeros of $\psin$ in $\mathcal{D}_\delta$ lie within $\frac{K(\delta)}{N}$ from $\Gamma$,
\item For any $\delta, K > 0$, there exists $C_{K, \delta}$ such that for $\beta\in\mathcal{D}_\delta$ with $\mathrm{dist}(\beta, \Gamma)\leq\frac{K(\delta)}{N}$ we have $|\Psi'_N(\beta)| \geq C_{k, \delta}N$,  $|\Psi''_N(\beta)| \leq CN^2$, where $C > 0$ does not depend on $K, \delta$. 
\end{itemize}
\end{claim}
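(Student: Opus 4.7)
The plan is to write $\psin(\beta) = A(\beta) + B(\beta)\,e^{-N h_\beta(\ub)}$ with $A(\beta) := (1-\beta)^{-1/2}$ and $B(\beta) := 2\sqrt{\beta/(\beta-\beta^2+\ub^2)}$, and to exploit the fact that $A$, $B$, and $\beta\mapsto h_\beta(\ub)$ (together with their first and second derivatives in $\beta$) are analytic and bounded above and, where relevant, below by constants depending only on $\delta$ on the closure of $\mathcal{D}_\delta$. The analyticity of $\ub$ in $\beta$, and hence of $B$ and of $h_\beta(\ub)$, follows from Claim \ref{cl:zeros-h} together with the implicit function theorem (or equivalently from Levinson's theorem as used in Section \ref{sec:pacman}). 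With this bookkeeping in place, both bullets reduce to elementary manipulations combined with Claim \ref{cl:est-re-hb}.

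For the first bullet, a zero $\beta\in\mathcal{D}_\delta$ of $\psin$ satisfies $|A(\beta)| = |B(\beta)|\,e^{-N\Re h_\beta(\ub)}$. Since $|A|$ and $|B|$ are bounded from above and from below by constants depending only on $\delta$ on $\mathcal{D}_\delta$, this forces $N\,|\Re h_\beta(\ub)|\leq C(\delta)$. The lower bound in Claim \ref{cl:est-re-hb} then yields $\mathrm{dist}(\beta,\Gamma) \leq K(\delta)/N$, as required.

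For the second bullet, I would differentiate to obtain
\[
\Psi'_N(\beta) \;=\; A'(\beta) + B'(\beta)\,e^{-Nh_\beta(\ub)} \;-\; N\,B(\beta)\,\frac{d h_\beta(\ub)}{d\beta}\,e^{-Nh_\beta(\ub)}.
\]
By \eqref{eq:hb-der}, $dh_\beta(\ub)/d\beta = -\ub^2/(2\beta^2)$, and by Claim \ref{cl:zeros-h} $|\ub^2|\asymp|\beta-1|$, so $|dh_\beta(\ub)/d\beta|$ is bounded below by some $c(\delta)>0$ on $\mathcal{D}_\delta$. When $\mathrm{dist}(\beta,\Gamma)\leq K(\delta)/N$, the upper bound in Claim \ref{cl:est-re-hb} gives $N|\Re h_\beta(\ub)|\leq C'(\delta)K(\delta)$, hence $|e^{-Nh_\beta(\ub)}|$ is bounded below by a constant $c'(K,\delta)>0$. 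Thus the third term has modulus at least $c''(K,\delta)N$, which dominates the $O(1)$ contributions from $A'$ and $B'\,e^{-Nh_\beta(\ub)}$ for $N$ large, yielding $|\Psi'_N(\beta)|\geq C_{K,\delta}\,N$. A second differentiation produces terms of orders $O(1)$, $O(N)$, and $O(N^2)$; the dominant one is $N^2 B(\beta)(dh_\beta(\ub)/d\beta)^2 e^{-Nh_\beta(\ub)}$. All prefactors are controlled on the neighborhood considered, and $|e^{-Nh_\beta(\ub)}|$ is bounded from above via Claim \ref{cl:est-re-hb}, giving $|\Psi''_N(\beta)|\leq C N^2$.

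The argument is essentially routine once the ingredients above are in hand; the main ``obstacle'' is simply the careful bookkeeping of the dependence of the various constants on $\delta$ and $K$. The one substantive input is Claim \ref{cl:est-re-hb}: its two-sided equivalence between $|\Re h_\beta(\ub)|$ and $\mathrm{dist}(\beta,\Gamma)$ is precisely what converts proximity of $\beta$ to $\Gamma$ into quantitative control of the exponential factor $e^{-Nh_\beta(\ub)}$, which in turn drives both the lower bound on $|\Psi'_N|$ and the upper bound on $|\Psi''_N|$.
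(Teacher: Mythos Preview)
Your proposal is correct and follows essentially the same approach as the paper's proof. Both arguments use Claim \ref{cl:est-re-hb} to relate $|\Re h_\beta(\ub)|$ to $\mathrm{dist}(\beta,\Gamma)$, and both identify the $N\cdot\ub^2/(2\beta^2)$ term (via \eqref{eq:hb-der}) as the dominant contribution to $\Psi'_N$; your phrasing of the first bullet (balancing $|A|$ against $|B|e^{-N\Re h_\beta(\ub)}$ at a zero) is a slightly more direct version of the paper's contrapositive argument, but the content is the same.
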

\begin{proof}[Proof of Claim \ref{cl:psi-properties}] We start with the first statement. If $\Re h_\beta(\ub) > 0$ and the distance $\mathrm{dist}(\beta, \Gamma) > \frac{K}{N}$, then using the lower bound of Claim \ref{cl:est-re-hb}, we obtain
\[
|\psin| \geq \frac{1}{|\beta - 1|^{1/2}} - 2\left| \frac{\beta}{\beta - \beta^2 + \ub^2}\right|^{1/2}e^{-\frac{K}{C(\delta)}}.
\]
When $K = K(\delta)$ is sufficiently large, the right hand side is strictly greater than $0$.

Similarly, if $\Re h_\beta(\ub) <  0$ and $\mathrm{dist}(\beta, \Gamma) > \frac{K}{N}$, then, using again the lower bound of Claim \ref{cl:est-re-hb}, we obtain
\[
|\psin| \geq  - \frac{1}{|\beta - 1|^{1/2}} + 2\left| \frac{\beta}{\beta - \beta^2 + \ub^2}\right|^{1/2}e^{\frac{K}{C(\delta)}} > 0,
\]
for sufficiently large $K = K(\delta)$. Therefore, the zeros of $\psin$ in $\mathcal{D}_\delta$ lie in $\{\mathrm{dist}(\beta, \Gamma) \leq \frac{K}{N} \}$.
\smallskip

\noindent Now we prove the second statement. By a direct computation we get
\[
\begin{split}
\Psi'_N(\beta) &= \frac{1}{2(\beta - 1)^{3/2}} \\&+ 2\sqrt{\frac{\beta}{\beta - \beta^2 +\ub^2}}e^{-Nh_\beta(\ub)}\left[-N\left\{  \frac{\partial}{\partial u}h_\beta(u)|_{u = \ub}\frac{\partial}{\partial \beta}\ub + \frac{\partial}{\partial \beta}h_\beta(u)|_{u = \ub}\frac{\partial \beta}{\partial \beta}\right\} \right] \\& + 2\sqrt{\frac{\beta}{\beta - \beta^2 +\ub^2}}e^{-Nh_\beta(\ub)}\frac{\beta + \frac{\ub^2}{\beta} - 2\ub\frac{\partial}{\partial \beta}\ub}{\beta - \beta^2 +\ub^2}.
\end{split}
\]
Since $\ub$ is a saddle point of $h_\beta(u)$ we get $\frac{\partial}{\partial u}h_\beta(u)|_{u = \ub} = 0$ and by (\ref{eq:hb-der}) we get $\frac{\partial}{\partial \beta}h_\beta(u)|_{u = \ub} = -\frac{\ub^2}{2\beta^2}$, therefore
\[
\Psi'_N(\beta) = \frac{1}{2(\beta - 1)^{3/2}} + \sqrt{\frac{\beta}{\beta - \beta^2 +\ub^2}}e^{-Nh_\beta(\ub)} \left[N\frac{\ub^2}{2\beta^2} + \frac{\beta + \frac{\ub^2}{\beta} - 2\ub\frac{\partial}{\partial \beta}\ub}{\beta - \beta^2 +\ub^2} \right].
\]
Using the upper bound of Claim \ref{cl:est-re-hb} we obtain for sufficiently large $N$
\[
|\Psi'_N(\beta)| \geq -C_1(\delta) + e^{-NC(\delta)\frac{K(\delta)}{N}}C_2(\delta)[NC_3(\delta) - C_4(\delta)] \geq C_{K, \delta}N.
\]
The bound $|\Psi''_N(\beta)|\leq C N^2$ is obtained in the same way.
\end{proof}
Let $C_\delta > 0$. The properties of $\psin$ listed in Claim \ref{cl:psi-properties} imply that the distance between any two zeros of $\psin$ in $\mathcal{D}_\delta$ at least $\geq \frac{C_\delta}{N}$. Indeed, let $\beta_0$ be a zero of $\psin$. Then,
\[
\psin = \Psi_N(\beta_0) + \Psi'_N(\beta_0)(\beta - \beta_0) + O(N^2)(\beta - \beta_0)^2.
\]
Since $|\Psi'_N(\beta_0)| \geq C_{K, \delta}N$, we obtain
\[
|\psin| \geq  C_{K, \delta}N|\beta - \beta_0| - CN^2|\beta - \beta_0|^2,
\]
for every $|\beta - \beta_0|\leq cN^{-1}$. In particular, $|\psin| = 0$ implies $|\beta - \beta_0| \geq \frac{C_{\delta}}{N}$.


Now we look at the discs of radius $C_\delta N^{-2}$ around each zero of $\psin$ near $\Gamma$ and we claim that there is exactly one zero of $\zn$ in each disc. By an additional application of Rouch\'e's theorem it is sufficient to show that for sufficiently large $C_\delta$ we have on the boundary of each disc
\begin{equation}\label{eq:zn-psin}
|\zn - \psin|\leq\frac{|\psin|}{2}.
\end{equation}
The estimate (\ref{eq:zn-psin}) follows since $|\zn - \psin|\leq O(N^{-1})$ uniformly in $\{\delta < |\beta - 1| \leq c',\,\,\, \mathrm{dist}(\beta, \Gamma) \leq \frac{2K(\delta)}{N}\}$ and on the boundary of each disc of radius $C_\delta N^{-2}$ we get $|\psin| \geq C_{K,\delta}N \frac{C_\delta}{ N^{2}} = \frac{C_0}{N}$, where $C_0$ may be made arbitrarily large by adjusting $C_\delta$. Therefore, there is exactly one zero of $\zn$ in each of these discs.

\smallskip
\noindent To show that there are no additional zeros of $\zn$ in $\delta < |\beta - 1| < c'$, first we observe that, by Theorem \ref{th:pacman}, the zeros of $\zn$ in $\delta < |\beta - 1| < c'$ lie in $\Re\beta \geq 1$. 
\smallskip

Consider the domain $\widetilde{\mathcal{D}}_\delta = \{ \widetilde\delta < |\beta - 1| \leq \widetilde{c},\,\, \Re\beta \geq 1 \}$, where $\widetilde\delta $ and $\widetilde{c}$ are such that
\[
\delta - \frac{C_\delta}{N} \leq \widetilde\delta \leq \delta,\,\,\,\, c' \leq \widetilde{c} \leq c' + \frac{C_\delta}{N},
\]
and the distance $\mathrm{dist}(\beta, \partial\widetilde{\mathcal{D}}_\delta) \geq \frac{C_2}{N}$ for any zero $\beta$ of $\psin$. We will
check that the inequality (\ref{eq:zn-psin}) holds on the boundary $\partial\widetilde{\mathcal{D}}_\delta$, then by Rouch\'e 's 
theorem the zeros of $\zn$ in $\widetilde{\mathcal{D}}_\delta$ are exactly those constructed in the first part of the proof.

We divide the boundary $\partial\widetilde{\mathcal{D}}_\delta$ of the domain $\widetilde{\mathcal{D}}_\delta$ as follows: $\partial\widetilde{\mathcal{D}}_\delta = A\cup B$, where $A = \partial\widetilde{\mathcal{D}}_\delta \cap \{ \mathrm{dist}(\beta, \Gamma)\leq \frac{K(\delta)}{N}\}$ and $B = \partial\widetilde{\mathcal{D}}_\delta\setminus A$. For sufficiently large $K(\delta)$ the inequality (\ref{eq:zn-psin}) is valid on $B$ by Theorem \ref{th:pacman}. 
We now
 show that (\ref{eq:zn-psin}) also holds on $A$. The set $\{ \widetilde\delta < |\beta - 1| < \widetilde{c},\,\,\, \mathrm{dist}(\beta, \Gamma) \leq \frac{K(\delta)}{N}\}$ contains $A$, therefore we have uniformly in $A$
\[
|\zn - \psin|\leq O(N^{-1}).
\]
Also, as before, we have $|\psin| \geq C_{K, \delta}N\frac{C_\delta}{N} = C_0$ on $A$. Thus, the inequality (\ref{eq:zn-psin}) holds on $A$, and we conclude the proof.
\qed

\subsection{Proof of Corollary \ref{cor:emp-mes}} 
Define
\[
\widetilde{\mu}_N = \frac{1}{N}\sum_{\beta:\;|\beta - 1|\leq c',\,\ \Re\beta \geq 1,\,\, \psin = 0}\delta_\beta.
\]
We will show that
\begin{eqnarray}
  \label{en:tilde-mun-conv} &&\widetilde\mu_N \to_{N\to\infty} \mu,\\
\label{en:mu-conv} &&\mu\{ |\beta - 1| < \delta\} \to_{\delta \searrow 0}\,\, 0,\\ 
\label{muN-conv} &&\limsup_{N\to\infty}\mu_N\{ |\beta - 1| < \delta\} 
\to_{\delta \searrow 0}\,\, 0 .
\end{eqnarray}
Choose $\delta > 0$. Then, by \eqref{en:tilde-mun-conv} 
and Corollary \ref{cor:zeros-of-zn} we obtain
\[
\mu_N\restriction_{\delta \leq |\beta - 1| \leq c'}\,\, \to \mu\restriction_{\delta \leq |\beta - 1| \leq c'}.
\]
Using \eqref{en:mu-conv} and \eqref{muN-conv}, 
and letting $N\to \infty$ and then $\delta\to 0$ we obtain $\mu_N \to \mu$.
It remains to show \eqref{en:tilde-mun-conv}, \eqref{en:mu-conv} and 
\eqref{muN-conv}. 

Toward this end, note that since 
$\xi'(1) = \sqrt{{3}/{2}} \neq 0$, see 
(\ref{eq:xi'1}), it follows that
$\xib$ is one-to-one in a neighborhood of $\beta=1$, and in fact
it maps a neighborhood of $\beta = 1$ biconformally onto a neighborhood of 
$0$. In particular, with $I$ denoting the
line segment $[0, c''e^{\frac{i\pi}{4}})$ with $c''>0$ small, 
we have by 
Claim \ref{cl:crit-curve} that $\xi^{-1}(I)$ 
is a segment of $\Gamma\cap \{\Im \beta \geq 0\}$
containing $\beta=1$.
Therefore, by \eqref{en:xib},  
$h_\beta(\ub)$ maps $\xi^{-1}(I)$ bijectively onto $(0, i c)$ for some $c>0$.
A similar argument applies with
$I_-=[0, -c''e^{\frac{i\pi}{4}})$ replacing $I$ and
 $\Gamma\cap \{\Im \beta \leq 0\}$ replacing  
 $\Gamma\cap \{\Im \beta \geq 0\}$

Let $\beta_k\in\Gamma$, $k\in \Z$,
be such that $h_{\beta_k}(u_{\beta_k}) = \frac{2\pi i k}{N}$ 
is smaller in absolute value than $c$. 
It follows from the above considerations that
\[
\frac{1}{N} \sum_{|\beta_k - 1|\leq c'}\delta_{\beta_k} \underset{N\to\infty}\longrightarrow \mu,
\]
since the left-hand side and the right-hand side assign the same value to each half-open curved segment of $\Gamma$ connecting two points $\beta_l$ and $\beta_{l'}$; this value is $\frac{l - l'}{N}$.

Next, let $\widetilde{\beta}_k$ be such that $\Re\widetilde{\beta}_k \geq 1$ and
\[
h_{\widetilde{\beta}_k}(u_{\widetilde{\beta}_k}) = -\frac{1}{N}\log\left[-\frac{1}{2}\sqrt{\frac{\widetilde{\beta}_k - \widetilde{\beta}_k^2 + u_{\widetilde{\beta}_k}^2}{\widetilde{\beta}_k - \widetilde{\beta}_k^2}}\right] + \frac{2\pi i k}{N}.
\]
Then, using the relation (\ref{en:xib}) and that $\xib$ is one to one,
we get
\[
\widetilde\mu_N = \frac{1}{N}\sum_{|\widetilde{\beta}_k - 1|\leq c'}\delta_{\widetilde{\beta}_k},\,\,\,\, \sup_{k\leq N}|\widetilde{\beta}_k - \beta_k| = o(1).
\]
Hence, $\widetilde\mu_N\to\mu$, and this finishes the proof of \eqref{en:tilde-mun-conv}.

Next,  since $\xi(\cdot)$ is Lipschitz in a neighborhood of $\beta=1$, we
get from \eqref{en:xib} that 
\begin{equation}\label{eq:hb-ub-est}
|h_\beta(\ub)| \leq C|\beta - 1|^2.
\end{equation}
%
The relation \eqref{en:mu-conv} follows since
for $b(\delta)$ which is the intersection of the critical curve $\Gamma$ with $|\beta - 1| = \delta$ we obtain
\[
\mu\{ |\beta - 1| < \delta \} \leq 2|h_{b(\delta)} (u_{b(\delta)})| \leq 2C |\beta - 1|^2 \leq C\delta^2,
\]
where the two last inequalities follow from the estimate
\eqref{eq:hb-ub-est}
and from the fact that $|\beta - 1| = \delta$.

To prove the relation \eqref{muN-conv}, denote by $n_N(\delta)$ the number of zeros of $\zn$ in $|\beta - 1| \leq \delta$. Then, by Jensen's formula we obtain 
\[
n_N(\delta) = \#\{ \beta\colon\,\, |\beta - 1| \leq \delta,\,\,\, \zn = 0\} \leq \frac{1}{\log\frac{2\delta}{\delta}}\log\frac{\max_{|\beta - 1| = 2\delta}|\zn|}{|Z_N(1)|}.
\]
From the case for real $\beta$ the denominator is bounded from below by $N^{-C}$, for some $C > 0$, and we need to bound the numerator from above. We get
\[
\frac{1}{\log2}\log\frac{\max_{|\beta - 1| = 2\delta}|\zn|}{|Z_N(1)|} \leq C[N\max_{|\beta - 1| = 2\delta}|h_\beta(\ub)| + A_\delta + \log N] \leq C' [N\delta^2 + A_\delta + \log N],
\]
where the first inequality follows from Theorem \ref{th:pacman}, and the last one follows from the estimate (\ref{eq:hb-ub-est}) and since $|\beta - 1| = 2\delta$. Thus, using the last estimate, we obtain
\[
\mu_N\{ |\beta - 1|\leq \delta\} \leq C'\delta^2 + \frac{C'A_{\delta}}{N} + \frac{C'\log N}{N}.
\]
Letting first $N\to\infty$ and then $\delta\searrow 0$ we obtain \eqref{muN-conv} and thus conclude the proof.
\qed

\section{Conjecture: the critical curve}
\label{sec-open}
We conjecture that there exists a constant $b\in (0,\infty)$, a curve
\[
\{1 + \epsilon_0(R) + iR\}_{-\infty < R < \infty}
\]
such that
\[
\epsilon_0(0) = 0,\, \quad \epsilon_0(R) > 0\,\, \text{for}\,\,  R\neq 0,\, \quad \lim_{R\to+\infty}R^2\epsilon_0(R) =b,
\]
and an auxiliary function $\delta_0(R)\geq 0$ with equality only at $0$,
 so that the following holds:
\begin{equation}\label{eq:conj}
\lim_{N\to\infty}\frac{1}{N}\log|Z_{1 + \epsilon + iR, N}| \,\,
\begin{cases} 
= 0, & 0\leq\epsilon\leq \epsilon_0(R),\\
> 0, & \epsilon_0(R) < \epsilon \leq \epsilon_0(R) + \delta_0(R). 
\end{cases}
\end{equation}
Moreover, we conjecture that the curve is described by one branch of the saddle point equation, as follows.

For $f_\beta(u)$ from Proposition \ref{prop:int-rep-z}, consider the saddle point equation
\begin{equation}\label{eq:saddle-p}
f'_\beta(u) = \beta u - \beta\tanh(\beta u) = 0.
\end{equation}
It defines a multivalued function $u(\beta)$. We claim that there exists a branch $u^*(\beta)$ in $0 < \Re \beta\leq C \approx 1.3$, such that $u^*(1) = 0$.  Indeed, the equation (\ref{eq:saddle-p}) is equivalent to 
\begin{equation}\label{eq:ustar}
\beta = \frac{1}{2u}\left[\log\frac{1 + u}{1 - u} + 2\pi i k\right],\, k\in\Z,
\end{equation}
where we take the principal branch of the logarithm. For $k = 0$ the equation (\ref{eq:ustar}) defines a bijection between the first and the fourth quadrants in the $u$-plane and the domains depicted in Figure \ref{fig:u} (left). For $k = 1$ the function from the right hand side of the equation (\ref{eq:ustar}) maps the first and the fourth quadrants onto the domains in Figure \ref{fig:u} (right). 
\begin{figure}[h!]
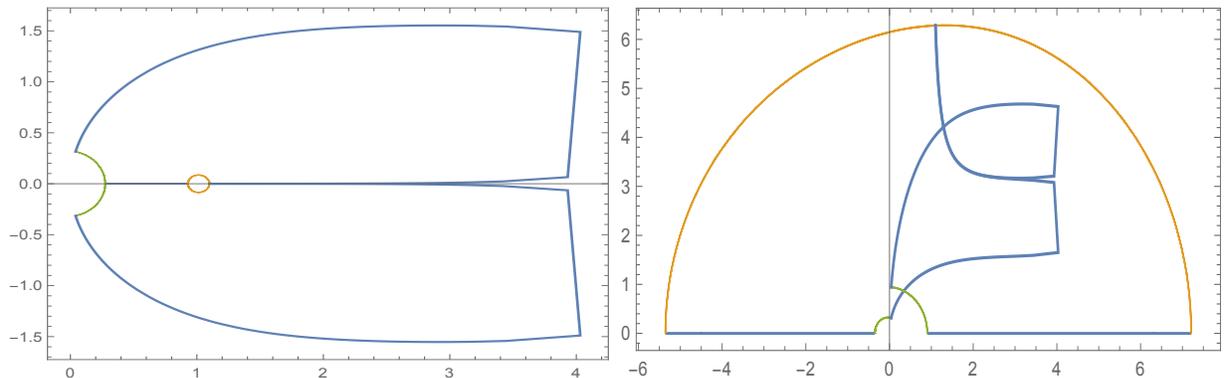

\begin{center}
\includegraphics[width=8cm, height=5cm]{0b0t}
\includegraphics[width=8cm, height=5cm]{1b1t}
\end{center}
\caption{The images of the I-st and IV-th quadrants in the $u$-plane under (\ref{eq:ustar}), with $k=0$ (left) and $k=1$ (right). The vertical lines in both plots and the large semi-circle at the right lie at infinity.}
\label{fig:u}
\end{figure}

\noindent Consequently, one can define a branch $u^*(\beta)$ in $\{0 < \Re\beta \leq C,\,\,\, \Im\beta > 0\}$, where $C\approx 1.3$ is the real part of the intersection point between the two curves on Figure \ref{fig:u} (right), which corresponds to $k = 0$ in the intersection with the domain in Figure \ref{fig:u} (left) and to $k = 1$ outside it. Similarly, we define $u^*(\beta)$ for $\Im\beta < 0$.
\begin{conj} The relation $\mathrm{(}$\ref{eq:conj}$\mathrm{)}$ holds with $\epsilon_0(R)$ defined by the equation
\[
\Re f_{1 + \epsilon_0(R) + iR}(u^*(1 + \epsilon_0(R) + iR)) = 0.
\]
\end{conj}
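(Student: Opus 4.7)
The starting point is the contour-integral representation of Proposition~\ref{prop:int-rep-z},
\[
Z_{\beta,N} = \left(\frac{N\beta}{2\pi}\right)^{1/2}\int_\R e^{-N f_\beta(u)}\,\di u,
\]
which reduces the large-$N$ asymptotics to a rigorous steepest-descent analysis. The saddles of $f_\beta$ are the zeros of $f_\beta'(u) = \beta(u-\tanh(\beta u))$; the branch $u^*(\beta)$ defined in the excerpt is the analytic continuation, from $\beta>1$ real, of the positive real saddle that emerges when $\beta$ exceeds $1$. Parallel to the two pictures in Theorem~\ref{th:pacman}, I would take $\pm u^*(\beta)$ and the trivial saddle $u=0$ as the three candidate ``active'' saddles and study for which $\beta$ the real contour $\R$ can be deformed (relative to the poles of $\log\cosh(\beta u)$, located at $\beta u\in i\pi/2+i\pi\Z$) onto a contour that does or does not have to cross $\pm u^*(\beta)$.

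For the regime $\epsilon_0(R) < \epsilon \leq \epsilon_0(R)+\delta_0(R)$, which is the easier direction, the plan is to deform $\R$ to a steepest-descent contour $\gamma_+$ that descends from $\pm\infty$ into $\pm u^*(\beta)$ along the eigen-directions of $f_\beta''(\pm u^*)$ and joins the two saddles through a piece on which $\Re f_\beta \geq \Re f_\beta(u^*)$. A standard Laplace expansion on $\gamma_+$ then gives $Z_{\beta,N} = C_\beta \exp(-N f_\beta(u^*))(1+o(1))$, whence $F_\beta = -\Re f_\beta(u^*) > 0$ is precisely the defining inequality of this region. The quantity $\delta_0(R)$ should be determined as the largest $\epsilon$ for which no additional saddle of $f_\beta$ crosses an anti-Stokes line and forces a different contour; extending the analysis beyond this boundary would require adding the corresponding new saddle contributions to $Z_{\beta,N}$.

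For the regime $0\leq \epsilon \leq \epsilon_0(R)$, where the conjecture predicts $F_\beta = 0$, the strategy is to produce a contour $\gamma_0$ homotopic to $\R$ (through the pole-free region) on which $\Re f_\beta(u)\geq 0$ with the only critical point a non-degenerate minimum at $u=0$. A Gaussian computation at $0$, together with exponential decay on the rest of $\gamma_0$, will then give $|Z_{\beta,N}|\leq C_\beta$, hence $F_\beta = 0$. The existence of such a $\gamma_0$ should be equivalent to the statement that the level set $\{\Re f_\beta = 0\}$ does not separate $0$ from $\pm\infty$ in $\C\setminus\{\text{poles}\}$, which is exactly the inequality $\Re f_\beta(u^*(\beta)) \geq 0$: the critical curve $\Re f_\beta(u^*)=0$ is the locus where passing $u^*$ becomes unavoidable. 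Near $\beta=1$ this picture is already confirmed by Theorem~\ref{th:pacman} and Claim~\ref{cl:crit-curve}, and a global version would extend their local contour analysis.

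The main obstacle is the global control of the level-curve geometry of $\Re f_\beta$ in the complex $u$-plane, uniformly over $\beta$ in the conjectured region. Two concrete difficulties stand out. First, there is a \emph{monodromy} issue: the branch $u^*(\beta)$ undergoes a jump (at $\Re\beta\approx 1.3$, per the excerpt) as the relevant Riemann sheet of the equation $\beta = \tfrac{1}{2u}\log\tfrac{1+u}{1-u}$ switches from $k=0$ to $k=1$, and any proof must push the contour deformation continuously through this transition. Second, the asymptotic regime $R\to\infty$, where $\epsilon_0(R)\sim b/R$, is delicate: there $u^*(\beta)$ approaches one of the poles of $\log\cosh(\beta u)$, so the saddle coalesces with a singularity, a naive steepest-descent expansion breaks down, and one must run a scaling argument along the pole lattice $\beta u\approx i\pi/2$. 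Proving sharpness of the curve $\epsilon=\epsilon_0(R)$ in both directions simultaneously --- i.e., that the correct $\gamma_0$ ceases to exist exactly as $\Re f_\beta(u^*)$ changes sign --- is the crux of the conjecture and is what prevents a direct bootstrapping from the local analysis in Theorem~\ref{th:pacman}.
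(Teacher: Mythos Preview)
The statement you are addressing is a \emph{conjecture} in the paper, not a theorem: the paper offers no proof whatsoever, and indeed presents this as the main open problem left by the analysis. So there is no ``paper's own proof'' to compare against. Your write-up is not a proof either, and to your credit you seem aware of this --- you correctly flag the two genuine obstructions (global control of the level-set topology of $\Re f_\beta$ across the $k=0/k=1$ sheet transition, and the degeneration as $R\to\infty$ where the active saddle approaches a pole of $\log\cosh$). These are precisely the reasons the authors leave the statement as a conjecture rather than a theorem.

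What you have written is a plausible and well-informed \emph{strategy}, consistent with the local picture the paper establishes near $\beta=1$ in Theorem~\ref{th:pacman} and Claim~\ref{cl:crit-curve}. But the heart of the matter --- showing that a contour $\gamma_0$ with $\Re f_\beta\geq 0$ exists if and only if $\Re f_\beta(u^*(\beta))\geq 0$, uniformly over the full strip $0<\Re\beta\leq C$ --- is asserted rather than argued. In particular, the equivalence you state (``the existence of such a $\gamma_0$ should be equivalent to $\Re f_\beta(u^*)\geq 0$'') is exactly the conjecture restated in contour language; it is not obvious that no \emph{other} saddle or topological obstruction intervenes before $u^*$ does, nor that the Stokes geometry is as simple as the three-saddle picture suggests once one leaves the neighborhood of $\beta=1$. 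Until those points are settled, this remains a proof outline for an open problem, not a proof.
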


\end{document}